\theoremstyle{plain}
\newtheorem{theorem}{Theorem}[section]
\newtheorem{corollary}[theorem]{Corollary}
\newtheorem{lemma}[theorem]{Lemma}
\newtheorem{proposition}[theorem]{Proposition}
\newtheorem{definition}[theorem]{Definition}
\theoremstyle{remark}
\newtheorem{remark}[theorem]{Remark}
\newtheorem{example}[theorem]{Example}
\begin{document}


\title{About the bound of the $\text{C}^*$ exponential length}

\author{Qingfei Pan and Kun Wang}

\address{Qingfei Pan: Department of Applied Mathematics\\
Sanming University\\
Sanming, Fujian\\
China 365004}

\email{pqf101@yahoo.com.cn}

\address{Kun Wang:Department of Mathematics\\
University of Puerto Rico, Rio
Piedras Campus, P.O.Box 70377\\
San Juan, Puerto Rico, USA 00931
}
\email{lingling-1106@hotmail.com}

\subjclass[2000]{46L05}

\begin{abstract}
Let $X$ be a compact Hausdorff space. In this paper, we give an example to show that there is $u\in \text{C}(X)\otimes \text{M}_n$ with $\det (u(x))=1$ for all $x\in X$ and $u\sim_h 1$ such that the $\text{C}^*$ exponential length of $u$
(denoted by $cel(u)$) can not be controlled by
$\pi$. This example answers a question of N.C. Phillips (see \cite{10}). Moreover, in simple inductive limit $\text{C}^*$-algebras, similar examples also exist.

\end{abstract}

\maketitle


\newcommand\sfrac[2]{{#1/#2}}
\newcommand\cont{\operatorname{cont}}
\newcommand\diff{\operatorname{diff}}


\section{Introduction}
Exponential rank was introduced by Phillips and Ringrose \cite{PR}; and, subsequently, exponential length was introduced by Ringrose \cite{12}.
These invariants have been fundamental in the structure and classification of $\text{C}^*$-algebras. Among other things, they have played important roles in factorization and approximation properties for $\text{C}^*$-algebras e.g., the weak FU property \cite{6}, Weyl-von Neumann Theorems \cite{l1}, \cite{l2} (which in turn have been important in various generalizations of BDF Theory beyond the Calkin algebra case), and the uniqueness theorems of classification theory \cite{EGL}, \cite{l3}.

The $\text{C}^*$ exponential length and rank have  been extensively studied (see \cite{12}, \cite{PR}, \cite{6}, \cite{P}, \cite{Z1}, \cite{Z3}, \cite{Z2}, \cite{GL}, \cite{10}, \cite{8}, \cite{lin1}, \cite{11}, \cite{T}, \cite{lin2}, etc. -an incomplete list). In \cite{10} page 851, the paragraphs before Proposition 7.9, N.C. Phillips mentioned that
"We believe that suitable modifications of Lemma 5.2 and 5.3 will show that if $u\sim_h 1$ and $\det(u) = 1$,
then $cel(u) \leq \pi$ (even though, for general $u$, $cel(u)$ can be arbitrarily large)." However, in fact  for any $\varepsilon>0$,
we can find a unitary $u$ in a $\text{C}^*$-algebra with $\det(u)=1$, $u\sim_h 1$ and $cel (u)\geq 2\pi-\varepsilon$. In this paper, we provide a method for constructing such examples. In simple inductive limit $\text{C}^*$-algebras (simple AH algebras), we also get such examples. A recent paper of H. Lin's also proves similar results independently (not completely same) (see \cite{lin2}) but with different method. Note that for unital real rank zero   $\text{C}^*$-algebras, $\pi$ is an upper bound for the $\text{C}^*$ exponential length (see \cite{lin1}).
In a forthcoming paper, the second author will show that $2\pi$ is an upper bound for the $\text{C}^*$ exponential length of $\text{M}_n(C(X))$ provided the matrix size $n$ is large. Consequently, for the simple AH algebras $A$ (with no dimension growth) classified by Elliott-Gong-Li, the $\text{C}^*$ exponential length of elements in $CU(A)$---the closure of the commutator subgroup of $U(A)$, is at most $2\pi.$

\textbf{Acknowledgments} The authors would like to thank the referee for giving many valuable and constructive suggestions.

\section{Preliminaries}

For convenience of the reader, we recall some definitions and lemmas (see \cite{5}, \cite{12} for more details).

\begin{definition}
Let $X$ be a compact space and $B=\text{C}(X)\otimes \text{M}_n$, for $u\in U(B)$(unitary group of $B$), let $\det(u)$ be a function from $X$ to $S^1$ whose value at $x$ is $\det(u(x))$.
\end{definition}

\begin{definition}
 Let $A$ be a unital $\text{C}^*$-algebra and  $u$ be a unitary which lies in the connected component of the identity 1 in $A$. Define the $\text{C}^*$ exponential length of $u$ (denoted by $cel(u)$) as follows:
$$cel(u)=\inf\{\sum_{i=1}^{k}\|h_{i}\|: u=\text{exp}(ih_{1})\text{exp}(ih_{2})...\text{exp}(ih_{k})\}.$$
\end{definition}
\begin{remark} For a unital $\text{C}^*$-algebra $A$, let $U_0(A)$ be the connected component of $U(A)$ containing the identity $1$. Recall from \cite{12} that if $u\in U_0(A)$, then the $\text{C}^*$ exponential length $cel(u)$ is equal to the infimum of the lengths of rectifiable paths from $u$ to $1$ in $U(A)$.
\end{remark}
The following lemma is an easy example for calculating the $\text{C}^*$ exponential length.
\begin{lemma}\label{ex0} Let $\alpha\in\mathbb{R}$ and $u\in \text{C}[0,1]$ be defined by $u(t)=\exp{(it\alpha)}$, then $$cel(u)=\min_{k\in\mathbb{Z}}\max_{t\in[0,1]}|\alpha t-2k\pi|.$$ Moreover, if $|\alpha|\leq 2\pi$, then $cel(u)=|\alpha|$.
\end{lemma}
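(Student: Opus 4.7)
The plan is to prove the two inequalities separately: the upper bound directly from the definition of $cel$, and the lower bound from the path-length characterization of $cel(u)$ recalled in the Remark above. For the upper bound, fix $k\in\mathbb{Z}$ and set $h_k(t)=\alpha t-2\pi k$, a self-adjoint element of $C[0,1]$. Then $\exp(ih_k(t))=\exp(i\alpha t)=u(t)$ for every $t$, so $u$ is a single exponential with $\|h_k\|=\max_{t\in[0,1]}|\alpha t-2\pi k|$. The definition of $cel$ immediately gives $cel(u)\le\|h_k\|$, and minimizing over $k$ yields one half of the identity.

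For the lower bound, fix any rectifiable path $\gamma:[0,1]\to U(C[0,1])$ from $1$ to $u$ and regard $H(s,t):=\gamma(s)(t)$ as a continuous map $[0,1]^2\to S^1$. Since the square is simply connected, $H$ admits a continuous lift $\widetilde H:[0,1]^2\to\mathbb{R}$ with $\widetilde H(0,t)=0$; then $t\mapsto\widetilde H(1,t)-\alpha t$ is continuous and $2\pi\mathbb{Z}$-valued, hence equal to a single constant $-2\pi k$. For any $t_0\in[0,1]$, point evaluation at $t_0$ is a contractive $*$-homomorphism, so the length of the path $s\mapsto H(s,t_0)$ in $S^1$ is at most the length of $\gamma$. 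On the other hand, that path in $S^1$ lifts to $s\mapsto\widetilde H(s,t_0)$ in $\mathbb{R}$, so its rectifiable length is at least $|\widetilde H(1,t_0)-\widetilde H(0,t_0)|=|\alpha t_0-2\pi k|$. Sweeping $t_0$ over $[0,1]$ and then taking the infimum over $\gamma$ produces $cel(u)\ge\min_k\max_t|\alpha t-2\pi k|$.

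The supplementary claim for $|\alpha|\le 2\pi$ is an arithmetic check: for $k=0$ one has $\max_t|\alpha t|=|\alpha|$, while for $k\ne 0$, evaluation at $t=0$ already gives $|2\pi k|\ge 2\pi\ge|\alpha|$, so the minimum is attained at $k=0$. The main delicacy throughout is the lower bound on the rectifiable length of a continuous path in $S^1$ in terms of the displacement of a continuous lift; this reduces to the standard observation that, for sufficiently fine partitions, the chord $|e^{i\theta_1}-e^{i\theta_0}|$ and the increment $|\theta_1-\theta_0|$ are arbitrarily close, so the sup-over-partitions total chord length equals the total variation of the lifted argument and in particular dominates its net displacement.
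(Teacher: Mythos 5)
Your proof is correct and follows essentially the same strategy as the paper: lift the homotopy $[0,1]^2\to S^1$ through the covering $\mathbb{R}\to S^1$, use simple connectedness of the square to produce the constant $2\pi k$, bound the path length from below by the net displacement of the lift, and get the matching upper bound by exhibiting a single exponential (equivalently, the path $s\mapsto\exp(ish_{k_0})$ used in the paper). The only executional difference is that the paper reduces to piecewise smooth paths and applies the integral triangle inequality to $\partial\widetilde v/\partial s$, whereas you work directly with rectifiable lengths and pointwise evaluation, handling the chord-versus-arc comparison explicitly; both routes are sound.
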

\begin{proof} Since $cel(u)=\inf\{ \text{length}(u_{s}): u_{s}\mbox{ is a path in }U(\text{C}[0,1]) \mbox{ from } u \mbox{ to } 1\}$, let $v_{s}(t)$ be any path from $1$ to $u$, that is,
$v_{0}(t)=1,v_{1}(t)=u(t).$ Without loss of generality, we can assume $v_s$ is piecewise smooth. Then $\text{length}(v_{s})=\int_{0}^{1}\|\frac{dv}{ds}\|ds.$ Since $v_{s}(t)$ can be considered as a map from $[0,1]\times[0,1]$ to $S^{1}$ and $\mathbb{R}$ is a covering space of $S^{1},$ there exists a unique map $\tilde{v}_s(t)$ from $[0,1]\times[0,1]$ to $\mathbb{R}$ such that:
$$v_s(t)=\pi(\tilde{v}_s(t))~~\mbox{and}~~\widetilde{v}_0(0)=0,\qquad (*)$$ where $\pi(x)=e^{ix}.$ Therefore
$$\frac{dv}{ds}=\pi'(\tilde{v}_s(t))\cdot\frac{d\widetilde{v}}{ds},$$ which implies $\|\frac{dv}{ds}\|=\|\frac{d\tilde{v}}{ds}\|.$

By $(*)$, $\pi(\tilde{v}_0(t))=v_0(t)=1$, hence $\tilde{v}_0(t)\in 2\pi\mathbb{Z}$ for all $t\in[0,1]$. Since $\tilde{v}_0(0)=0$ and $\tilde{v}_0(t)$ is continuous,  $\tilde{v}_0(t)=0$ for all $t\in[0,1]$. In addition, by $(*)$, we can also get $\pi(\tilde{v}_1(t))=v_1(t)=\exp(it\alpha)$,  thus $\tilde{v}_1(t)-\alpha t\in 2\pi \mathbb{Z}$ for all $t$. By continuity of $\tilde{v}_1(t)-\alpha t$, there exists some integer $k$ such that $\tilde{v}_1(t)-\alpha t=2k\pi$ for all $t\in[0,1]$. Therefore, $$\int_0^1\|\frac{d\tilde{v}}{ds}\|ds\geq \|\int_{0}^{1}\frac{d\tilde{v}}{ds}ds\|=\max_{t\in[0,1]}|\tilde{v}_1(t)- \tilde{v}_0(t)|\geq\min_{k\in\mathbb{Z}}\max_{t\in[0,1]}|\alpha t-2k\pi|.$$\\
Let $L=\min\limits_{k\in\mathbb{Z}}\max\limits_{t\in[0,1]}|\alpha t-2k\pi|$ and  $k_0\in\mathbb{Z}$ be such that $L=\max\limits_{t\in[0,1]}|\alpha t-2k_0\pi|$.
Fix $$v_s(t)=\exp\{is(\alpha t-2k_0\pi)\},$$ then $v_0(t)=\exp\{0\}=1$ and $v_1(t)=\exp\{i\alpha t-2k_0\pi i\}=\exp\{i\alpha t\}$ and $$\int_{0}^{1}||\frac{d{v}}{ds}||ds=\int_{0}^{1}||\alpha t -2k_0\pi||ds=\int_{0}^{1}\max\limits_{t\in[0,1]}|\alpha t-2k_0\pi|ds=\int_{0}^{1}Lds=L.$$ Thus $v_s(t)$ is a path in $U(C[0,1])$ connecting 1 and $u(t)$ with length $L$. Therefore, $cel(u)=L.$

Let us assume $\alpha\leq 2\pi$. For $k=0,$ $$\max\limits_{t\in[0,1]}|\alpha t-2k\pi|=\max\limits_{t\in[0,1]}|\alpha t-0|=|\alpha|.$$ For $k\neq 0$,
$$\max\limits_{t\in[0,1]}|\alpha t-2k\pi|\geq|0-2k\pi|=2|k|\pi\geq |\alpha|.$$ Hence, $\min\limits_{k\in\mathbb{Z}}\max\limits_{t\in[0,1]}|\alpha t-2k\pi|=|\alpha|.$ That is $cel(u)=|\alpha|$.
\end{proof}

\section{Counter-examples}

\begin{lemma}\label{deri}
Let $f(s,t):X\triangleq[0,1]\times[0,1]\rightarrow U(\text{M}_n(\mathbb{C}))$ be a smooth map. For any $\delta>0$, there is a smooth map $g(s,t):[0,1]\times[0,1]\rightarrow U(\text{M}_n(\mathbb{C}))$ such that:\\
(1) $\|f-g\|<\delta$, $\|\frac{\partial f}{\partial s}(s,t)-\frac{\partial g}{\partial s}(s,t)\|<\delta$, $\|\frac{\partial f}{\partial t}(s,t)-\frac{\partial g}{\partial t}(s,t)\|<\delta$;\\
(2) $g(s,t)$ has no repeated eigenvalues for all $(s,t)\in[0,1]\times[0,1]$.
\end{lemma}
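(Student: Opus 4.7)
The plan is to set $g(s,t)=f(s,t)\,v$ for a single constant unitary $v$ sufficiently close to $I$, chosen so that $f(s,t)v$ never has a repeated eigenvalue. This turns (1) into a routine $C^1$-estimate and reduces (2) to a transversality/measure argument.

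Let $D\subset U(n)$ denote the closed subset of unitaries with at least one repeated eigenvalue. Stratifying $D$ by the multiplicity pattern of the eigenvalues and examining the conjugation orbit map $U(n)/\mathrm{Stab}(\lambda)\to U(n)$ for a representative diagonal $\lambda$ in each stratum, a standard computation shows $D$ is a finite union of smooth submanifolds of $U(n)$ of real codimension at least $3$ (the classical Wigner--von Neumann codimension, here for the unitary group); in particular $\dim D\leq n^{2}-3$.

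Now consider the smooth map
\[
\Psi:[0,1]^{2}\times U(n)\longrightarrow U(n),\qquad \Psi(s,t,v)=f(s,t)\,v.
\]
For each fixed $(s,t)$ the partial map $v\mapsto f(s,t)v$ is a diffeomorphism of $U(n)$, so $\Psi$ is a submersion, and hence transverse to every smooth submanifold of the target. Therefore $\Psi^{-1}(D)$ is a finite union of smooth submanifolds of $[0,1]^{2}\times U(n)$, each of codimension $\geq 3$, and so of dimension $\leq n^{2}-1$. Projecting onto the $U(n)$ factor by $\pi(s,t,v)=v$ and applying Sard's theorem stratum by stratum, $\pi\bigl(\Psi^{-1}(D)\bigr)$ has Lebesgue measure zero in $U(n)$.

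Consequently, arbitrarily close to $I$ we can pick $v\in U(n)$ with $v\notin\pi(\Psi^{-1}(D))$; equivalently, $f(s,t)v\notin D$ for every $(s,t)\in[0,1]^{2}$. Setting $g(s,t)=f(s,t)v$ then yields a smooth map into $U(n)$ with no repeated eigenvalues. For the $C^{1}$-estimate (1), the identities
\[
g-f=f(v-I),\qquad \partial_{s}g-\partial_{s}f=(\partial_{s}f)(v-I),\qquad \partial_{t}g-\partial_{t}f=(\partial_{t}f)(v-I)
\]
reduce the problem to choosing $\|v-I\|<\delta/\bigl(1+\|\partial_{s}f\|_{\infty}+\|\partial_{t}f\|_{\infty}\bigr)$, which is possible since good $v$'s are dense near $I$. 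The main obstacle I expect is the clean justification of the codimension-$3$ property of $D$; once that is in hand, the rest is submersion transversality and Sard plus an elementary operator-norm estimate.
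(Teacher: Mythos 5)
Your proposal is correct, and it takes a genuinely cleaner route than the paper's. The paper follows Guillemin--Pollack's transversality homotopy scheme almost verbatim: it embeds $U(n)$ in $\mathrm{M}_n(\mathbb{C})\cong\mathbb{R}^{2n^2}$, perturbs $f$ additively by $f(x)+\varepsilon r$, retracts back to $U(n)$ via the polar-decomposition map $\pi$, and then invokes the parametric transversality theorem to pick a good $r$. The price of that approach is the retraction: the paper has to argue (somewhat informally, with the constants $\varepsilon_1,\varepsilon_2$) that the post-composed perturbation still has derivatives close to those of $f$, which is the only genuinely delicate point. You instead perturb \emph{inside the group}, by right-multiplication $g=fv$ with a constant $v\in U(n)$ near $I$. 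This makes the image land in $U(n)$ automatically, kills the need for the retraction, and turns (1) into the one-line estimate $\|\partial_\bullet g-\partial_\bullet f\|=\|(\partial_\bullet f)(v-I)\|\le\|\partial_\bullet f\|_\infty\|v-I\|$. Your transversality step is also the standard one packaged slightly differently: since $v\mapsto f(s,t)v$ is a diffeomorphism, $\Psi(s,t,v)=f(s,t)v$ is a submersion (even after restricting to any boundary stratum of $[0,1]^2\times U(n)$, since the $U(n)$-directions alone span), hence transverse to each stratum $D_i$ of the repeated-eigenvalue set $D$; transversality plus $\operatorname{codim} D_i\ge 3$ forces $\dim\Psi^{-1}(D_i)\le n^2-1<\dim U(n)$, so the projection to the $v$-factor has measure-zero image by the easy half of Sard, and a good $v$ near $I$ exists. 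The one external input you both rely on is the Choi--Elliott fact that $D$ is a finite union of embedded submanifolds of codimension at least $3$, which you correctly flag as the only nontrivial ingredient; the paper simply cites it. In short: same codimension fact, same transversality-with-a-parameter philosophy, but your choice of the parameter space ($U(n)$ acting by right translation rather than a Euclidean ball feeding a tubular-neighborhood retraction) eliminates the only fussy estimate in the paper's proof.
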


\begin{proof}

This is the standard transversal theorem. Even though in the original statement in \cite{4}, it does not assert  that the derivatives are also close. But the proof really shows that
 (see pages 70-71 of \cite{4}). For convenience of the reader, we repeat the construction  here for our special case.

 By smoothly extending $f$ to an open neighborhood of $[0,1]\times[0,1]$, we can assume $f$ is defined on an open manifold without boundary. Let $Z$ be a subspace of $U(\text{M}_n(\mathbb{C}))$ defined by $$Z=\{u\in U(\text{M}_n(\mathbb{C})):~u\mbox{ has repeated eigenvalues}\}.$$
 Since $U(\text{M}_n(\mathbb{C}))$ is a subspace of $\text{M}_n(\mathbb{C})$ and the latter can be identified with $\mathbb{R}^{2n^2}$ as a topological space, $f$ is a smooth map from $X$ to $\mathbb{R}^{2n^2}$.
 Let $B$ be the open unit ball of $\mathbb{R}^{2n^2}$ (with Euclidean metric), then $B$ corresponds to some open ball (contained in the unit ball of $\text{M}_n(\mathbb{C})$) in $\text{M}_n(\mathbb{C})$ with the matrix norm, for which we still use the notation $B$.
 Let $0<\varepsilon<1/2$, for $x\in X$, $r\in B$, define $$F(x,r)=\pi[f(x)+\varepsilon r],$$ where $\pi:Gl_n(\mathbb{C})\rightarrow U(\text{M}_n(\mathbb{C}))$ is defined by the Polar decomposition, which serves as the map $\pi$ in \cite{4} from the tubular neighborhood of $U(\text{M}_n(\mathbb{C}))$ (which is $Y$ in the notation of \cite{4} page 70) to $U(\text{M}_n(\mathbb{C}))$. By the definition, $\pi$ is a smooth map.
 Define $f_r:X\rightarrow U(\text{M}_n(\mathbb{C}))$ by $$f_r(x)=F(x,r).$$ Since $\pi$ restricts to the identity on $U(\text{M}_n(\mathbb{C}))$, $$f_0(x)=F(x,0)=\pi(f(x))=f(x).$$

 For fixed $x$, $r\rightarrow f(x)+\varepsilon r$ is certainly a submersion of $B\rightarrow \text{M}_n(\mathbb{C})$. As the composition of two submersions is another, $r\rightarrow F(x,r)$ is a submersion. Therefore, $F$ is transversal to $Z$. Then the  Transversality Theorem (see page 68 of \cite{4}) implies that $f_r$ is transversal to $Z$ for almost all $r\in B$.

By a result of M. Choi and G. Elliott (see the second paragraph on page 77 of \cite{MG}), $Z$ is a finite union of embedded submanifolds of codimension at least three. Thus $$\dim(X)+\dim(Z)<\dim(U(\text{M}_n(\mathbb{C}))).$$ Therefore, $f_r$ transversal to $Z$ implies $\text{Im}f_r\bigcap Z=\emptyset.$

Since $f+\varepsilon r$ is in a neighborhood of $f$ and $\pi$ restricts to the identity on $U(\text{M}_n(\mathbb{C}))$, there exist positive real numbers $\varepsilon_1=\varepsilon_1(\varepsilon,r),~\varepsilon_2=\varepsilon_2(\varepsilon,r)$ such that:
 $$\|\frac{\partial f_r}{\partial s}\|\leq (1+\varepsilon_1)\|\frac{\partial f}{\partial s}\|,\quad\|\frac{\partial f_r}{\partial t}\|\leq(1+\varepsilon_2)\|\frac{\partial f}{\partial t}\|.$$
Therefore, by taking $r$ appropriately, we can get $f_r$ satisfies the properties (1) and (2). Finally, let $g=f_r$ and this completes the proof.

\end{proof}

\begin{remark} In \cite{6} Lemma 2.5, N. C. Phillips proves a similar result except for the property of derivatives.
\end{remark}

\begin{corollary}\label{leng}
Let $\widetilde{F}_s$ be a path in $U(\text{M}_k(C[0,1]))$. For any $\varepsilon>0$, there exists a path $F_s$ in $U(\text{M}_k(C[0,1]))$ such that:\\
(1) $\|F-\widetilde{F}\|<\varepsilon$;\\
(2) $F_s(t)$ has no repeated eigenvalues for all $(s,t)\in[0,1]\times[0,1]$;\\
(3) $|\text{length}(\widetilde{F})-\text{length}(F)|<\varepsilon$.\\
Moreover, if for each $t\in[0,1]$, $\widetilde{F}_1(t)$ has no repeated eigenvalues, then $F$ can be chosen to be such that $F_1(t)=\widetilde{F}_1(t)$ for all $t\in[0,1]$.
\end{corollary}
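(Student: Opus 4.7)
The plan is to reduce Corollary \ref{leng} to Lemma \ref{deri} by first smoothing $\widetilde F$, then applying the lemma, and finally modifying the construction to handle the ``moreover'' clause via a cutoff in the $s$-variable.

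First I would regard $\widetilde F_s(t)$ as a continuous map $[0,1]\times[0,1]\to U(\text{M}_k(\mathbb{C}))$ and, by a standard mollification, obtain a smooth $f(s,t)$ with $\|f-\widetilde F\|<\varepsilon/2$ and $|\text{length}(f)-\text{length}(\widetilde F)|<\varepsilon/2$, where $\text{length}(f)=\int_0^1\|\partial_s f(s,\cdot)\|\,ds$. Then I would apply Lemma \ref{deri} to $f$ with tolerance $\delta<\varepsilon/2$, producing a smooth $g$ with no repeated eigenvalues such that $\|f-g\|<\delta$ and $\|\partial_s f-\partial_s g\|<\delta$. Setting $F_s(t):=g(s,t)$: property (2) is automatic, property (1) follows from the triangle inequality $\|F-\widetilde F\|\le \|f-g\|+\|f-\widetilde F\|<\varepsilon$, and property (3) follows from
$$|\text{length}(F)-\text{length}(f)|\le \int_0^1\bigl|\,\|\partial_s g\|-\|\partial_s f\|\,\bigr|\,ds\le \int_0^1\|\partial_s g-\partial_s f\|\,ds<\delta$$
combined with $|\text{length}(f)-\text{length}(\widetilde F)|<\varepsilon/2$.

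For the ``moreover'' clause I would modify the proof of Lemma \ref{deri}. Replace the perturbation $F(x,r)=\pi[f(x)+\varepsilon r]$ with $F(s,t,r)=\pi[f(s,t)+\varepsilon\,\chi(s)\,r]$, where $\chi:[0,1]\to[0,1]$ is a smooth cutoff with $\chi\equiv 1$ on $[0,1-\eta]$ and $\chi(1)=0$. At $s=1$ the perturbation vanishes, so $g(1,t)=f(1,t)=\widetilde F_1(t)$, provided the mollification is arranged to fix the slice $s=1$ (for instance by mollifying only on $[0,1-\eta]$ and leaving $f$ equal to $\widetilde F_1$ on a neighborhood of $s=1$). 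Since the set of matrices with distinct eigenvalues is open in $U(\text{M}_k(\mathbb{C}))$ and $\widetilde F_1(t)$ has no repeated eigenvalues on the compact set $\{1\}\times[0,1]$, after shrinking $\eta$ one may assume $\widetilde F_s(t)$ has no repeated eigenvalues on the whole strip $[1-\eta,1]\times[0,1]$; the small cutoff-reduced perturbation preserves this open condition there. On $[0,1-\eta]$ we have $\chi\equiv 1$, so $r\mapsto f(s,t)+\varepsilon r$ remains a submersion and the transversality argument of Lemma \ref{deri} applies verbatim to produce the required $g$ with no repeated eigenvalues on that region.

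The main obstacle I anticipate is the bookkeeping in the moreover clause: the mollification has to be arranged so that $\widetilde F|_{s=1}$ is preserved exactly; the cutoff $\chi$ must be chosen so that the derivative estimates of Lemma \ref{deri} survive multiplication by $\chi$ (this is essentially the statement $\|\partial_s(\chi r)\|\le \|\chi'\|_\infty\|r\|$, which is harmless); and one must verify the no-repeated-eigenvalue condition uniformly across the interface $s=1-\eta$ where the two arguments (transversality on one side, open-condition continuity on the other) meet. Once these bookkeeping issues are resolved, the equality $F_1=\widetilde F_1$ and the three estimates (1)--(3) all follow at once.
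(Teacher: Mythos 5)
Your overall strategy---smooth $\widetilde F$, apply Lemma~\ref{deri}, then fix the endpoint for the ``moreover'' clause---parallels the paper's structure, but the implementation diverges in two places and one of them contains a genuine gap.

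The gap is in the ``moreover'' clause. You build a cutoff $\chi(s)$ into the perturbation so that the resulting $g$ satisfies $g(1,t)=f(1,t)$, and you then want $f(1,t)=\widetilde F_1(t)$. But the Transversality Theorem that powers Lemma~\ref{deri} requires the base map $f$ to be \emph{smooth} on (an open neighborhood of) $[0,1]\times[0,1]$, while $\widetilde F_1$ is only assumed continuous. A smooth $f$ with $f(1,\cdot)=\widetilde F_1$ is therefore impossible in general, and the alternative you suggest (mollify only on $[0,1-\eta]$, set $f\equiv\widetilde F_1$ near $s=1$) destroys the smoothness hypothesis on precisely the strip where you still need $\chi>0$. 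The paper circumvents this by keeping the transversality machinery entirely in the first part of the argument: it first produces a smooth $F$ with properties (1)--(3), then \emph{replaces} $F_s$ on $[s_0,1]$ (for $s_0$ close to $1$) by the merely continuous path $F_s=F_{s_0}e^{i\frac{s-s_0}{1-s_0}H_n}$ with $e^{iH_n}=F_{s_0}^*\widetilde F_1$. Since that replacement lies entirely inside an $\eta$-neighborhood of $\widetilde F_1$ on which ``no repeated eigenvalues'' is an open condition, no further transversality argument is needed there, and the corollary never claimed the output path is smooth. You should adopt this post-hoc patching rather than modifying Lemma~\ref{deri}.

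The other divergence is your appeal to ``standard mollification'' to produce a smooth $f$ with both $\|f-\widetilde F\|<\varepsilon/2$ and $|\text{length}(f)-\text{length}(\widetilde F)|<\varepsilon/2$. Mollifying a map into $U(\text{M}_k(\mathbb C))$ in the ambient vector space $\text{M}_k(\mathbb C)$ leaves the unitary group, so one must compose with the polar retraction and then verify that the length distortion of that retraction is small near $U(\text{M}_k(\mathbb C))$; one also needs lower semicontinuity of length to get closeness from both sides, not merely the Jensen inequality $\text{length}(f)\le\text{length}(\widetilde F)$. None of this is fatal, but it is not ``standard'' and it is the part the paper spends the most effort on: the paper constructs its smooth approximation directly inside the unitary group, by choosing a fine partition $\{s_j\}$, smoothing each slice $\widetilde F_{s_j}$ in $t$, joining consecutive slices by exponential paths $G_{s_j}e^{isH_j}$, reparametrizing with a flat-ended $\xi$ to glue the pieces smoothly, and then explicitly bounding the resulting length against $\sum_j\|G_{s_{j+1}}-G_{s_j}\|$. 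If you keep the mollification route, you should at least make explicit the projection step and the two-sided length estimate.
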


\begin{proof}
Let $\varepsilon_1$ be a small number to be determined later. Let $\delta_0$ be such that $|1-e^{i\theta}|\leq \delta_0$ implies $|\theta|\leq (1+\varepsilon_1)|1-e^{i\theta}|$ for $\theta\in\mathbb{R}$.
For $\varepsilon>0$, let $\delta=\min\{\delta_0,\varepsilon/6,1/2\}$.  By the definition of the length, there exist $0=s_0<s_1<s_2<\cdots<s_n=1$ such that $$\|\widetilde{F}_{s_{j+1}}-\widetilde{F}_{s_{j}}\|<\delta/2,\mbox{ for }j=0,1,\cdots, n-1,$$ and $$\sum_{j=0}^{n-1}\|\widetilde{F}_{s_{j+1}}-\widetilde{F}_{s_{j}}\|\leq length(\widetilde{F}_s)\leq\sum_{j=0}^{n-1}\|\widetilde{F}_{s_{j+1}}-\widetilde{F}_{s_{j}}\|+\varepsilon/4.$$
Note that for each $j$, $\widetilde{F}_{s_j}(t)$ is a continuous map from $[0,1]$ to $U(\text{M}_k(\mathbb{C}))$. There exist smooth maps $G_{s_j}(t):[0,1]\rightarrow U(\text{M}_k(\mathbb{C}))$,  such that $$\|G_{s_j}-\widetilde{F}_{s_j}\|=\sup_{t\in [0,1]}\|G_{s_j}(t)-\widetilde{F}_{s_j}(t)\|<\frac{\delta}{4n},~~~\quad~~ j=0,1,\cdots,n.$$
Then
\begin{align*}
\|G_{s_{j+1}}-G_{s_j}\|&=\|G_{s_{j+1}}-\widetilde{F}_{s_{j+1}}+\widetilde{F}_{s_{j+1}}-\widetilde{F}_{s_{j}}+\widetilde{F}_{s_{j}}-G_{s_j}\|\\&\leq \|\widetilde{F}_{s_{j+1}}-\widetilde{F}_{s_{j}}\|+\frac{\delta}{2n}\leq\delta.
\end{align*}
And by the first equality, we can also get $$\|G_{s_{j+1}}-G_{s_j}\|\geq \|\widetilde{F}_{s_{j+1}}-\widetilde{F}_{s_{j}}\|-\frac{\delta}{2n}.$$
Therefore, $$|\|G_{s_{j+1}}-G_{s_j}\|-\|\widetilde{F}_{s_{j+1}}-\widetilde{F}_{s_{j}}\||\leq\frac{\delta}{2n},$$
$$|\sum_{j=0}^{n-1}\|G_{s_{j+1}}-G_{s_j}\|-\sum_{j=0}^{n-1}\|\widetilde{F}_{s_{j+1}}-\widetilde{F}_{s_{j}}\||\leq\frac{\delta}{2}.$$
Thus $$\sum_{j=0}^{n-1}\|G_{s_{j+1}}-G_{s_j}\|-\frac{\delta}{2}\leq\mbox{length}(\widetilde{F}_s)\leq\sum_{j=0}^{n-1}\|G_{s_{j+1}}-G_{s_j}\|+\frac{\delta}{2}+\varepsilon/4,$$
$$\sum_{j=0}^{n-1}\|G_{s_{j+1}}-G_{s_j}\|-\frac{\varepsilon}{8}\leq\mbox{length}(\widetilde{F}_s)
\leq\sum_{j=0}^{n-1}\|G_{s_{j+1}}-G_{s_j}\|+\varepsilon/2.\quad (*)$$

Now we want to define a smooth function $$\widetilde{G}_s(t):[0,1]\times[0,1]\rightarrow U(\text{M}_k(\mathbb{C}))$$ such that $\widetilde{G}_{s_j}(t)=G_{s_j}(t)\mbox{ for }j=0,1,\cdots,n,~ t\in[0,1].$ We will define it piece by piece on each subinterval $[s_j,s_{j+1}]$ ($j=0,1,\cdots,n-1$).

Suppose $$
G^*_{s_j}G_{s_{j+1}}=U_j
  \begin{pmatrix}
   e^{i\alpha_1(t)} & 0 & \cdots & 0 \\
   0 & e^{i\alpha_2(t)} & \cdots & 0 \\
   \vdots  & \vdots  & \ddots & \vdots  \\
   0 & 0 & \cdots & e^{i\alpha_k(t)}
  \end{pmatrix}U_j^*.
$$
Since $\|G^*_{s_j}G_{s_{j+1}}-I\|=\|G_{s_j}-G_{s_{j+1}}\|\leq\delta<1$, there exists a self-adjoint element $H_j(t)$ in $\text{M}_k(C[0,1])$ such that
$G^*_{s_j}G_{s_{j+1}}(t)=e^{iH_j(t)}$, (here $H_j(t)=-i\log [G_{s_j}^*(t)G_{s_{j+1}}(t)]$ which is a smooth function). Define $$\widetilde{G}_s(t)=G_{s_j}(t)e^{i\frac{s-s_j}{s_{j+1}-s_j}H_j}\mbox{ for }s_j\leq s\leq s_{j+1}, ~t\in[0,1],~j=0,1,\cdots,n-1.$$
Then $\widetilde{G}_s(t)~(s_j\leq s\leq s_{j+1})$ is a path in $U(\text{M}_k(C[0,1]))$ from $G_{s_j}$ to $G_{s_{j+1}}$ and $\widetilde{G}_s(t)$ is smooth for $(s,t)\in [s_j,s_{j+1}]\times[0,1]$. Moreover,
\begin{align*}
&\mbox{length}(\widetilde{G}_s|_{s_j\leq s\leq s_{j+1}})&\\=&\int_{s_j}^{s_{j+1}}\|\frac{\partial \widetilde{G}_s}{\partial s}\|ds\\
\leq&\int_{s_j}^{s_{j+1}}\frac{1}{s_{j+1}-s_j}\|G_{s_j}(t)H_j(t)\|ds\\
\leq&(1+\varepsilon_1)\|G_{s_{j}}U_j \begin{pmatrix}
   1-e^{i\alpha_1(t)} & 0 & \cdots & 0 \\
   0 & 1-e^{i\alpha_2(t)} & \cdots & 0 \\
   \vdots  & \vdots  & \ddots & \vdots  \\
   0 & 0 & \cdots &1-e^{i\alpha_k(t)}
  \end{pmatrix}U_j^*\|\\
  =&(1+\varepsilon_1)\|G_{s_{j}} [I-U_j\begin{pmatrix}
  e^{i\alpha_1(t)} & 0 & \cdots & 0 \\
   0 & e^{i\alpha_2(t)} & \cdots & 0 \\
   \vdots  & \vdots  & \ddots & \vdots  \\
   0 & 0 & \cdots & e^{i\alpha_k(t)}
  \end{pmatrix}U_j^*]\|\\
= &(1+\varepsilon_1) \|G_{s_{j}}[I-G_{s_{j}}^*{G}_{s_{j+1}}]\|\\
=&(1+\varepsilon_1) \|G_{s_{j}}-{G}_{s_{j+1}}\|.
\end{align*}

Therefore, $\widetilde{G}_s$ $(0\leq s\leq 1)$ is a piecewise smooth path in $U(\text{M}_k(C[0,1]))$ and
$$\sum_{j=0}^{n-1}\|G_{s_{j+1}}-G_{s_j}\|\leq\mbox{length}(\widetilde{G}_s)\leq(1+\varepsilon_1)\sum_{j=0}^{n-1}\|G_{s_{j+1}}-G_{s_j}\|.$$
Thus by $(*)$, we have
$$\mbox{length}(\widetilde{G}_s)(1+\varepsilon_1)^{-1}-\frac{\varepsilon}{8}\leq \mbox{length}(\widetilde{F}_s)\leq \mbox{length}(\widetilde{G}_s)+\varepsilon/2.$$

Finally, pick any smooth monotone function $\xi:[0,1]\rightarrow [0,1]$ with $$\xi(0)=0,~\xi(1)=1,~\frac{d^n\xi}{ds^n}\mid_{s=0}=0,~\frac{d^n\xi}{ds^n}\mid_{s=1}=0~\mbox{ for all }n\geq 1.$$
Let $$\widetilde{G}'_s(t)=G_{s_j}(t)e^{i\xi(\frac{s-s_j}{s_{j+1}-s_j})H_j}\mbox{ for }s_j\leq s\leq s_{j+1}, ~t\in[0,1],~j=0,1,\cdots,n-1.$$
Then $\widetilde{G}'_s(t)$ is smooth for all $(s,t)\in[0,1]\times[0,1]$ (since $\frac{\partial\widetilde{G}'_s(t)}{\partial s}|_{s=s_j}=0$ from both left and right for all $j=1,2,\cdots,n-1$) and $$\mbox{length}(\widetilde{G}'_s)=\mbox{length}(\widetilde{G}_s).$$
And for each $(s,t)\in[0,1]\times[0,1]$,
\begin{align*}
\|\widetilde{G}'_s(t)-\widetilde{F}_s(t)\|=&\|\widetilde{G}'_s(t)-\widetilde{G}'_{s_j}(t)+\widetilde{G}'_{s_j}(t)-\widetilde{F}_{s_j}(t)+\widetilde{F}_{s_j}(t)
-\widetilde{F}_s(t)\|\\
\leq&\|\widetilde{G}_{s_{j+1}}(t)-\widetilde{G}_{s_j}(t)\|+\|\widetilde{G}_{s_j}(t)-\widetilde{F}_{s_j}(t)\|+\|\widetilde{F}_{s_j}(t)
-\widetilde{F}_s(t)\|\\
\leq &\delta+\frac{\delta}{4n}+\frac{\varepsilon}{4}\leq\varepsilon/2,
\end{align*}
where $s_j$ satisfies $s_j\leq s\leq s_{j+1}$.

Thus, by choosing $\varepsilon_1$ appropriately, we have
$$|\mbox{length}(\widetilde{G}'_s)-\mbox{length}(\widetilde{F}_s)|<\varepsilon/2\mbox{ and }\|\widetilde{G}'-\widetilde{F}\|<\varepsilon/2.$$

  Since $\widetilde{G}'_s$ can be seen as a smooth map from $[0,1]\times[0,1]$ to $U(\text{M}_k(\mathbb{C}))$, by Lemma \ref{deri}, there exists $F$ such that $\|F-\widetilde{G}'\|<\varepsilon/2$ and $F_s(t)$ has no repeated eigenvalues for all $(s,t)\in[0,1]\times[0,1]$. Moreover,
$$|\text{length}(F_s)-\text{length}(\widetilde{G}'_s)|=|\int_0^1\|\frac{\partial F}{\partial s}\|ds-\int_0^1\|\frac{\partial \widetilde{G}'}{\partial s}\|ds|<\varepsilon/2.$$
Thus $F$ satisfies properties (1)-(3), which is what we want.

Moreover, if $\widetilde{F}_1(t)$ has no repeated eigenvalues for all $t\in[0,1]$, then there exists $\eta>0$ such that $\|u(t)-\widetilde{F}_1(t)\|<\eta$ implies $u(t)$ has no repeated eigenvalues for all $t\in [0,1]$. For $\varepsilon=\eta/2$, by the first part of the statement we can find a path $ F_s$ satisfying properties (1), (2), (3).
Let $s_0\in[s_{n-1},1)$ be such that $\|F_{s_0}(t)-F_1(t)\|\leq\eta/4$, (where $s_{n-1}$ is a point of the partition of $[0,1]$ for which  we mentioned in the proof of the first part of the statement). Then
$$\|F_{s_0}(t)-\widetilde{F}_1(t)\|=\|F_{s_0}(t)-F_1(t)+F_1(t)-\widetilde{F}_1(t)\|\leq3\eta/4.$$
Now let us redefine $F_s(t)$ on the subinterval $[s_0,1]$ (still use the notation $F_s(t)$) by a similar way as above:
$$F_s(t)=F_{s_0}(t)e^{i\frac{s-s_0}{1-s_0}H_n(t)},~~ \mbox{ for }s_0\leq s\leq1,$$ where $H_n(t)=-i\log [F_{s_0}^*(t)\widetilde{F}_1(t)]$.
Since this newly defined path $F_s$ lies in the $\eta$ neighborhood of $F_1$, $F_s(t)$ has no repeated eigenvalues for all $(s,t)\in [0,1]\times[0,1]$.
Thus this $F_s$ is what we want.

\end{proof}

\begin{definition} For a metric space $(Y,d)$, let $$P^{k}Y:=\{(y_{1},y_{2},...,y_{k}): y_{i}\in Y\}/\sim,$$ where $(y_{1},y_{2},...,y_{k})\sim(\widetilde{y}_{1},\widetilde{y}_{2},...,\widetilde{y}_{k})$ if $\exists \sigma\in S_{k}$ such that $y_{\sigma(i)}=\tilde{y}_{i}\mbox{ for all }1\leq i\leq k$. Let $[y_{1},y_{2},...,y_{k}]$ denote the equivalent class of $(y_1,y_2,\cdots,y_k)$  in $P^{k}Y$. Define also the metric of $P^{k}Y$ as:
$$\text{dist}([y_{1},y_{2},...,y_{k}],[\widetilde{y}_{1},\widetilde{y}_{2},...,\widetilde{y}_{k}])=\min_{\sigma\in S_{k}}\max_{1\leq i\leq k}d(y_{i},\widetilde{y}_{\sigma(i)}).$$
\end{definition}
The proof of the following lemma is straightforward.
\begin{lemma}\label{cover} Let $(Y,d)$ be a metric space and $\pi: \underbrace{Y\times Y\times ...\times Y}_{k}\longrightarrow P^{k}Y$ be the quotient map. Let $X\subset\underbrace{Y\times Y\times ...\times Y}_{k}$ be the set consisting of those elements $(y_{1},y_{2},...,y_{k})$ with $y_{i}\neq y_{j}$ if $i\neq j$. Then the restriction of $\pi$ to $X$ is a covering map.
\end{lemma}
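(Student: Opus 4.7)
The plan is to produce, around each point $(y_1,\ldots,y_k)\in X$, an evenly covered neighborhood in $P^kY$ by exploiting the fact that the $y_i$ are pairwise distinct, hence admit pairwise disjoint metric balls. The free action of $S_k$ on $X$ will then permute the sheets.

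First I would fix $(y_1,\ldots,y_k)\in X$ and set
\[
\varepsilon=\tfrac{1}{3}\min_{i\neq j}d(y_i,y_j)>0,\qquad B_i=\{z\in Y:d(z,y_i)<\varepsilon\}.
\]
The triangle inequality gives $B_i\cap B_j=\emptyset$ for $i\neq j$. Let $U=B_1\times\cdots\times B_k$ and, for each $\sigma\in S_k$, $U_\sigma=B_{\sigma(1)}\times\cdots\times B_{\sigma(k)}$. Each $U_\sigma$ lies in $X$ (its entries live in disjoint balls, so are distinct), and the $U_\sigma$ themselves are pairwise disjoint, again because the $B_i$ are.

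Next I would set $V=\pi(U)\subset P^kY$ and verify that $V$ is evenly covered. The key identity is
\[
\pi^{-1}(V)\cap X=\bigsqcup_{\sigma\in S_k}U_\sigma.
\]
The inclusion $\supset$ is immediate since $\pi(U_\sigma)=\pi(U)=V$. For $\subset$, suppose $(z_1,\ldots,z_k)\in X$ satisfies $\pi(z_1,\ldots,z_k)\in V$; then there exist $(w_1,\ldots,w_k)\in U$ and $\tau\in S_k$ with $z_{\tau(i)}=w_i\in B_i$, forcing $(z_1,\ldots,z_k)\in U_{\tau^{-1}}$. The same argument shows $\pi$ is injective on each $U_\sigma$: two tuples in $U_\sigma$ equivalent under $S_k$ must in fact be equal, since $B_{\sigma(i)}\cap B_{\sigma(j)}=\emptyset$ for $i\neq j$ rules out any nontrivial permutation.

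Finally I would show $\pi|_{U_\sigma}:U_\sigma\to V$ is a homeomorphism. It is continuous (as a restriction of $\pi$) and bijective by the previous step. It remains to check openness; this follows because $\pi$ is the quotient by the $S_k$-action, and such quotients by homeomorphism actions are always open maps (for any open $W$, $\pi^{-1}(\pi(W))=\bigcup_\sigma\sigma W$ is open, hence $\pi(W)$ is open). The main point requiring care is the decomposition of $\pi^{-1}(V)\cap X$ into sheets; once the disjointness of the balls is used to kill nontrivial permutations, everything else is formal. Openness of $X$ itself in $Y^k$ (needed to know $\pi^{-1}(V)\cap X$ coincides with the relevant open set) is immediate from the same $\varepsilon$ construction applied pointwise.
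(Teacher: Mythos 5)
Your proof is correct; it is the standard evenly-covered-neighborhood argument, and the paper itself omits the proof entirely, merely asserting that it ``is straightforward.'' Your construction of the disjoint balls $B_i$, the identification $\pi^{-1}(V)\cap X=\bigsqcup_{\sigma}U_\sigma$, the disjointness-of-balls argument killing nontrivial permutations for injectivity on each sheet, and the observation that quotient maps by finite groups acting by homeomorphisms are open are exactly the steps a full proof requires, so you have supplied precisely what the paper leaves to the reader.
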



We need the  following easy lemma.
\begin{lemma}\label{sm} Let $F: [0,1]\times[0,1]\rightarrow P^{k}S^{1}$ be a continuous function, suppose $$F(s,t)=[x_{1}(s,t),x_{2}(s,t),...,x_{k}(s,t)],$$ and for all $(s,t)\in [0,1]\times[0,1]$, $x_{i}(s,t)\neq x_j(s,t)$ if $i\neq j$. Then there are continuous functions $f_{1},f_{2},...,f_{k}:[0,1]\times[0,1]\rightarrow S^{1}$ such that: $$F(s,t)=[f_{1}(s,t),f_{2}(s,t),...,f_{k}(s,t)].$$
\end{lemma}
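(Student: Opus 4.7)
The plan is to recognize this as a standard covering space lifting problem and apply the lifting criterion from algebraic topology.

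First I would observe that the hypothesis that $x_i(s,t)\neq x_j(s,t)$ whenever $i\neq j$ means the image of $F$ lies entirely in the open subset $\pi(X)\subseteq P^kS^1$, where $X\subset (S^1)^k$ is the open set of tuples with pairwise distinct entries as in Lemma~\ref{cover}. By Lemma~\ref{cover}, the restriction $\pi|_X:X\to\pi(X)$ is a covering map. So I can regard $F$ as a continuous map from $[0,1]\times[0,1]$ into the base of this covering.

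Next I would invoke the standard lifting theorem: if $p:E\to B$ is a covering map and $f:Z\to B$ is continuous from a simply connected, locally path-connected space $Z$, then for any choice of basepoints $z_0\in Z$, $e_0\in E$ with $p(e_0)=f(z_0)$, there exists a unique continuous lift $\tilde{f}:Z\to E$ with $p\circ\tilde{f}=f$ and $\tilde{f}(z_0)=e_0$. Here $Z=[0,1]\times[0,1]$ is contractible, hence simply connected and locally path-connected, and I pick $z_0=(0,0)$, any representative $(y_1,\ldots,y_k)\in X$ with $\pi(y_1,\ldots,y_k)=F(0,0)$, and set $e_0=(y_1,\ldots,y_k)$. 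The theorem then produces a continuous map $\tilde{F}:[0,1]\times[0,1]\to X\subset (S^1)^k$.

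Finally, writing $\tilde{F}(s,t)=(f_1(s,t),f_2(s,t),\ldots,f_k(s,t))$ with each $f_i:[0,1]\times[0,1]\to S^1$ continuous (as the composition of $\tilde{F}$ with the $i$-th coordinate projection, which is continuous on $(S^1)^k$), the relation $\pi\circ\tilde{F}=F$ becomes exactly $F(s,t)=[f_1(s,t),\ldots,f_k(s,t)]$, as required. There is no serious obstacle: the whole argument is an application of Lemma~\ref{cover} together with the homotopy lifting/path-lifting machinery, the only point of care being to verify that $F$ lands in $\pi(X)$ so that Lemma~\ref{cover} applies, and that $[0,1]\times[0,1]$ satisfies the hypotheses of the lifting theorem.
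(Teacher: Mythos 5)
Your proof is correct and follows essentially the same route as the paper: identify the image of $F$ as landing in $\pi(X)$, apply Lemma~\ref{cover} to get a covering map, lift using simple connectedness of the square, and project to the coordinates. Your version is slightly more explicit about basepoints and local path-connectedness, but the argument is the same.
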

\begin{proof} Let $\pi: \underbrace{S^{1}\times S^{1}\times...\times S^{1}}_{k}\rightarrow P^{k}S^{1}$ denote the quotient map, and let $X\subset \underbrace{S^{1}\times S^{1}\times...\times S^{1}}_{k}$ be the set consisting of those elements $(x_1,x_2, \cdots ,x_k)$ with $x_i\neq x_j$ if $i\neq j$. Then by Lemma \ref{cover} $\pi|_X$ is a covering map  from $X$ to $\pi(X)$ (which is a subset of $P^kS^1$).



Note from the assumption of the Lemma, the image of $F$ is contained in  $\pi(X)$. Since $[0,1]\times[0,1]$ is simply connected, by the standard lifting theorem for covering spaces, the map $F:[0,1]\times[0,1]\to \pi(X)\subset P^k S^1$
can be lifted to a map $F_1:[0,1]\times[0,1] \to X (\subset \underbrace{S^{1}\times S^{1}\times...\times S^{1}}_{k})$.

Let $\pi_{j}:S^{1}\times S^{1}\times...\times S^{1}\rightarrow S^{1}$ be the projection onto the $j$th coordinate. For $1\leq j\leq k,$ define functions $f_{j}: [0,1]\times [0,1]\rightarrow S^{1}$ by $$f_{j}(s,t)=\pi_{j}(F_1(s,t)).$$ Then it is easy to see that $f_{j}$'s satisfy the requirements.
\end{proof}
\begin{remark}\label{remark} Let $F_s$ be a path in $U(M_k(C[0,1]))$ such that $F_s(t)$ has no repeated eigenvalues for all $(s,t)\in[0,1]\times[0,1]$. Let $\Lambda:[0,1]\times[0,1]\rightarrow P_k S^1$ be the eigenvalue map of $F_s(t)$, i.e. $\Lambda(s,t)=[x_1(s,t),x_2(s,t),\cdots,x_k(s,t)],$ where $\{x_i(s,t)\}_{i=1}^k$ are eigenvalues of the matrix $F_s(t)$. By Lemma \ref{sm}, there are continuous functions $f_{1},f_{2},...,f_{k}:[0,1]\times[0,1]\rightarrow S^{1}$ such that: $$\Lambda(s,t)=[f_{1}(s,t),f_{2}(s,t),...,f_{k}(s,t)].$$ For each fixed $(s,t)\in[0,1]\times[0,1]$, there is a unitary $U_s(t)$ such that $$F_s(t)=U_s(t)diag[f_{1}(s,t),f_{2}(s,t),...,f_{k}(s,t)]U_s(t)^*.$$ Note that $U_s(t)$ can be chosen to be continuous, but in this paper we don't need this property.

\end{remark}

\begin{proposition}\label{we} (see line 13-line 18 of page 71 of \cite{1}) If $U,V\in M_n(\mathbb{C})$ are unitaries with eigenvalues $u_1,u_2,\cdots,u_n$ and $v_1,v_2,\cdots, v_n$ respectively, then $$\min\limits_{\sigma\in S_n}\max\limits_{i}|u_i-v_{\sigma(i)}|\leq\|U-V\|.$$
\end{proposition}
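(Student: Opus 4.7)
The proposition is a classical matching-distance estimate for unitaries, and the natural approach is Hall's marriage theorem combined with a dimension count on spectral subspaces. The plan is to set $\delta = \|U-V\|$ and, for each $\varepsilon>0$, form the bipartite graph $G_\varepsilon$ on $\{1,\ldots,n\}\sqcup\{1,\ldots,n\}$ with an edge between $i$ and $j$ exactly when $|u_i - v_j|\leq \delta + \varepsilon$. A perfect matching in $G_\varepsilon$ yields a permutation $\sigma$ with $\max_i|u_i - v_{\sigma(i)}|\leq \delta+\varepsilon$; letting $\varepsilon\to 0$ and using finiteness of $S_n$ will then produce the desired $\sigma$. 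So the task reduces to verifying Hall's condition $|N_{G_\varepsilon}(S)|\geq |S|$ for every $S\subseteq \{1,\ldots,n\}$.

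I would argue this by contradiction. Suppose some $S$ fails Hall's condition and set $T = \{1,\ldots,n\}\setminus N_{G_\varepsilon}(S)$, so that $|S|+|T|>n$ and $|u_i - v_j|>\delta+\varepsilon$ for all $i\in S$, $j\in T$. Let $E_S = \sum_{i\in S}P_i$ and $F_T = \sum_{j\in T}Q_j$ be the associated spectral projections of $U$ and $V$. Since $\mathrm{rank}(E_S)+\mathrm{rank}(F_T) = |S|+|T|>n$, there is a unit vector $\xi\in \mathrm{Range}(E_S)\cap \mathrm{Range}(F_T)$, and consequently $\|W\|\geq \|W\xi\| = \|\xi\| = 1$ for the operator $W := E_SF_T$. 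On the other hand, $E_S$ commutes with $U$ and $F_T$ commutes with $V$, so one gets the Sylvester-type identity
$$UW - WV \;=\; E_S(U-V)F_T,$$
which immediately yields $\|UW - WV\|\leq \delta$.

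The contradiction is then supposed to come from a Davis--Kahan type spectral-gap estimate: because $U|_{\mathrm{Range}(E_S)}$ and $V|_{\mathrm{Range}(F_T)}$ have spectra (on the unit circle) separated in chord distance by more than $\delta+\varepsilon$, the Sylvester operator $X\mapsto UX - XV$, viewed on operators from $\mathrm{Range}(F_T)$ to $\mathrm{Range}(E_S)$, should be invertible with inverse of norm at most $1/(\delta+\varepsilon)$. Substituting into the identity above then gives $\|W\|\leq \delta/(\delta+\varepsilon)<1$, contradicting $\|W\|\geq 1$. This last bound --- a sharp Sylvester-inverse estimate with constant $1$ for normal operators with well-separated spectra --- is the main obstacle and is the content of the cited matrix-analysis reference. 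Alternatively, it can be unwound by direct computation in the two eigenbases, starting from $\|(U-V)\xi\|^2 = 2-2\,\mathrm{Re}\langle U\xi, V\xi\rangle$ and bounding the weighted sum $\sum_{i\in S,\,j\in T}|u_i-v_j|^2\,\mathrm{Re}\langle P_i\xi, Q_j\xi\rangle$ using that the overlap matrix $(\langle P_i\xi, Q_j\xi\rangle)_{i,j}$ has row sums $\|P_i\xi\|^2$ and column sums $\|Q_j\xi\|^2$, hence non-negative real marginals summing to $1$.
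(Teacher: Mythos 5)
The paper does not actually prove Proposition~\ref{we}; it only cites Bhatia--Davis \cite{1}, so there is no ``paper proof'' to compare against. Your Hall's--marriage reduction is a natural and correct first step: it reduces the statement to showing that if $S,T\subseteq\{1,\dots,n\}$ satisfy $|S|+|T|>n$ and $|u_i-v_j|>\eta$ for all $i\in S$, $j\in T$, then $\|U-V\|\geq\eta$, and taking a unit vector $\xi$ in the intersection of the two spectral subspaces, the operator $W=E_SF_T$ correctly satisfies $\|W\|\geq 1$ and $\|UW-WV\|=\|E_S(U-V)F_T\|\leq\|U-V\|$.

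The gap is the key estimate you invoke, namely a constant-$1$ Sylvester/Davis--Kahan bound $\|X\|\leq\|UX-XV\|/\eta$ for unitaries whose relevant spectra are merely \emph{pointwise} separated by $\eta$. This is false. Take $U=\operatorname{diag}(1,-1)$, $V=\operatorname{diag}(i,-i)$; then $\min_{i,j}|u_i-v_j|=\sqrt2=\eta$. The inverse of $X\mapsto UX-XV$ is the Schur multiplier by $M=[1/(u_i-v_j)]_{i,j}$; testing on $Y=\begin{pmatrix}1&1\\-1&1\end{pmatrix}$ gives $\|M\circ Y\|=\sqrt2=\|Y\|$, so the inverse Sylvester operator has operator norm at least $1>1/\eta$. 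The constant-$1$ estimate does hold when the two spectra are separated by a \emph{geometric barrier} (a line, or disjoint arcs), but in the Hall's-theorem setup the sets $\{u_i:i\in S\}$ and $\{v_j:j\in T\}$ are only pointwise separated and need not admit any such barrier; the example above has no separating line. Moreover, \cite{1} proves the matching-distance inequality directly, not this Sylvester bound, so deferring to the reference does not close the gap. Your fallback ``direct computation in the two eigenbases'' also breaks: the overlaps $a_{ij}=\langle P_i\xi,Q_j\xi\rangle$ are complex in general (only their row and column sums $\|P_i\xi\|^2$, $\|Q_j\xi\|^2$ are real and nonnegative), so the identity $\|(U-V)\xi\|^2=\sum_{i,j}|u_i-v_j|^2\,\mathrm{Re}\,a_{ij}$ is not valid --- one has instead $\|(U-V)\xi\|^2=\sum_{i,j}|u_i-v_j|^2\,\mathrm{Re}\,a_{ij}+2\sum_{i,j}\mathrm{Im}(\bar u_i v_j)\,\mathrm{Im}\,a_{ij}$, and the second sum is uncontrolled. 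A genuinely different argument (as in Bhatia--Davis) is needed to establish the proposition.
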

The same result for a pair of Hermitian matrices is due to H. Weyl (called Weyl's Inequality see \cite{14}).

\begin{lemma}\label{ca} Let $F_s$ be a path in $U(\text{M}_{n}(\text{C}[0,1]))$ and $f^{1}_s(t),f^{2}_s(t),...,f^{n}_s(t)$ be continuous functions such that $$F_s(t)=U_s(t)\text{diag}[f^{1}_s(t),f^{2}_s(t),...,f^{n}_s(t)]U_s(t)^*,$$ where $U_s(t)$ are unitaries. Suppose for any $(s,t)\in[0,1]\times[0,1]$, $f_s^i(t)\neq f_s^j(t)$ if $i\neq j$, then $$\text{length}(F_s)\geq\max_{1\leq j\leq n}\{\text{length}(f_s^j)\}.$$
(In this lemma, we assume that $F_s(t)$ is continuous, but we do not assume $U_s(t)$ is continuous.)
\end{lemma}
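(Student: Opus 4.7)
It suffices to show $\text{length}(f^j_s)\le\text{length}(F_s)$ for each $j$, from which $\max_j\text{length}(f^j_s)\le\text{length}(F_s)$ follows. Since each length is the supremum of partial sums $\sum_k\|f^j_{s_{k+1}}-f^j_{s_k}\|$ over finite partitions $0=s_0<s_1<\cdots<s_N=1$, and refining any partition only increases such a partial sum by the triangle inequality, the task reduces to establishing, for every \emph{sufficiently fine} partition, the inequality
\begin{equation*}
\sum_{k=0}^{N-1}\|f^j_{s_{k+1}}-f^j_{s_k}\|\;\leq\;\sum_{k=0}^{N-1}\|F_{s_{k+1}}-F_{s_k}\|\;\leq\;\text{length}(F_s).
\end{equation*}

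The key tool is Weyl's inequality for unitaries (Proposition~\ref{we}), applied pointwise in $t$: for each $t\in[0,1]$ and each $k$, some permutation $\sigma=\sigma(t,k)\in S_n$ satisfies
\begin{equation*}
\max_{1\leq i\leq n}|f^i_{s_k}(t)-f^{\sigma(i)}_{s_{k+1}}(t)|\;\leq\;\|F_{s_k}(t)-F_{s_{k+1}}(t)\|.
\end{equation*}
The principal obstacle is to force $\sigma$ to be the identity, so that the bound pairs $f^i$ with $f^i$ rather than permuting labels; without this we would gain no information about the path $f^i_s$. This is where the hypothesis that the eigenvalue functions are pairwise distinct enters: by compactness of $[0,1]\times[0,1]$ there exists $\delta>0$ with $|f^i_s(t)-f^j_s(t)|\geq 4\delta$ for all $(s,t)$ and all $i\neq j$. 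By joint uniform continuity of $F$ and of each $f^i$, I can choose the partition so fine that $\|F_{s_{k+1}}-F_{s_k}\|\leq\delta$ and $\sup_t|f^i_{s_{k+1}}(t)-f^i_{s_k}(t)|\leq\delta$ for every $i$ and $k$.

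For any $\sigma\ne\mathrm{id}$, pick $i$ with $\sigma(i)\neq i$; the triangle inequality then gives
\begin{equation*}
|f^i_{s_k}(t)-f^{\sigma(i)}_{s_{k+1}}(t)|\;\geq\;|f^i_{s_k}(t)-f^{\sigma(i)}_{s_k}(t)|-|f^{\sigma(i)}_{s_k}(t)-f^{\sigma(i)}_{s_{k+1}}(t)|\;\geq\;4\delta-\delta=3\delta,
\end{equation*}
which strictly exceeds the Weyl upper bound $\delta$. Hence the minimum in Weyl's inequality must be attained by $\sigma=\mathrm{id}$, yielding
\begin{equation*}
\max_{1\leq i\leq n}|f^i_{s_{k+1}}(t)-f^i_{s_k}(t)|\;\leq\;\|F_{s_{k+1}}(t)-F_{s_k}(t)\|\qquad\text{for every }t\in[0,1].
\end{equation*}
Taking the supremum over $t$ gives $\|f^i_{s_{k+1}}-f^i_{s_k}\|\leq\|F_{s_{k+1}}-F_{s_k}\|$ for every $i$ and $k$; summing in $k$ and invoking the refinement argument of the first paragraph completes the proof.
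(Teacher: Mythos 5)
Your proof is correct and follows essentially the same route as the paper's: apply Weyl's inequality (Proposition~\ref{we}) pointwise in $t$ on a sufficiently fine partition, use the uniform eigenvalue gap plus uniform continuity to force the optimal permutation to be the identity, and sum the resulting per-step bounds. The only cosmetic differences are that you phrase the reduction as ``fix $j$, compare partial sums directly'' rather than the paper's ``bound $\text{length}(F_s)$ below by $\sum_i \max_j$,'' and you additionally insist $\|F_{s_{k+1}}-F_{s_k}\|\leq\delta$, which is harmless and arguably makes the identity-permutation step cleaner.
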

\begin{proof} Let $$\varepsilon=\min\{|(f_s^i(t)-f_s^j(t))|:~i\neq j,~1\leq i,j\leq n,~s\in[0,1],~t\in[0,1]\}.$$ Since for each $j$, $f_s^j(t)$ is continuous with respect to $s$, there exists $\delta>0$ such that: for any partition $\mathcal{P}=\{s_1,s_2,\cdots,s_{\lambda}\}$ with $|\mathcal{P}|<\delta$, $$\|f_{s_i}^j(t)-f_{s_{i-1}}^j(t)\|<\varepsilon/2~~\mbox{ for all }2\leq i\leq\lambda,~~1\leq j\leq n.$$
Then by Proposition \ref{we},
\begin{align*}
\text{length}(F_s)\geq &\sum\limits_{i=2}^{\lambda}\|F_{s_i}-F_{s_{i-1}}\|=\sum\limits_{i=2}^{\lambda}\sup_{t\in[0,1]}\|F_{s_i}(t)-F_{s_{i-1}}(t)\|\\
\geq &\sum\limits_{i=2}^{\lambda}\sup_{t\in[0,1]}[\min\limits_{\sigma\in S_n}\max\limits_{1\leq j\leq n}|f_{s_i}^j(t)-f_{s_{i-1}}^{\sigma(j)}(t)|].
\end{align*}
If $\sigma(j)\neq j$, then
\begin{align*}
&|f_{s_i}^j(t)-f_{s_{i-1}}^{\sigma(j)}(t)|\\ \geq &|f_{s_i}^j(t)-f_{s_{i}}^{\sigma(j)}(t)|-|f_{s_{i}}^{\sigma(j)}(t)-f_{s_{i-1}}^{\sigma(j)}(t)|\\
>&\varepsilon-\varepsilon/2=\varepsilon/2.
 \end{align*}
 If $\sigma(j)=j$, then $|f_{s_i}^j(t)-f_{s_{i-1}}^j(t)|<\varepsilon/2$. Therefore,
$$\min\limits_{\sigma\in S_n}\max\limits_{1\leq j\leq n}|f_{s_i}^j(t)-f_{s_{i-1}}^{\sigma(j)}(t)|=\max\limits_{1\leq j\leq n}|f_{s_i}^j(t)-f_{s_{i-1}}^j(t)|.$$  \begin{align*}
\text{length}(F_s)&\geq \sum\limits_{i=2}^{\lambda}\sup_{t\in[0,1]}[\min\limits_{\sigma\in S_n}\max\limits_{1\leq j\leq n}|f_{s_i}^j(t)-f_{s_{i-1}}^{\sigma(j)}(t)|]\\
&\geq\sum\limits_{i=2}^{\lambda}\sup_{t\in[0,1]}\max\limits_{1\leq j\leq n}|f_{s_i}^j(t)-f_{s_{i-1}}^j(t)|\\
 &=\sum\limits_{i=2}^{\lambda}\max\limits_{1\leq j\leq n}\|f_{s_i}^j(t)-f_{s_{i-1}}^j(t)\|.~~~~(*)
 \end{align*}
Since $(*)$ holds for any partition $\mathcal{P}$ with $|\mathcal{P}|<\delta$, we have $$\text{length}(F_s)\geq\max\limits_{1\leq j\leq n}\text{length}(f_{s}^j).$$
\end{proof}
\begin{example}\label{Ex1} Let $A=\text{M}_{10}(\text{C}[0,1])$. Define
$$
u(t) =
  \begin{pmatrix}
   e^{-2\pi it\frac{9}{10}} & 0 & \cdots & 0 \\
   0 & e^{2\pi it\frac{1}{10}} & \cdots & 0 \\
   \vdots  & \vdots  & \ddots & \vdots  \\
   0 & 0 & \cdots & e^{2\pi it\frac{1}{10}}
  \end{pmatrix}_{10\times 10}.
$$
Then $u$ is a unitary in $A$ with $\det(u)=1$ and $u\sim_{h}1.$
\end{example}

\begin{theorem} Let $u\in \text{M}_{10}(\text{C}[0,1])$ be defined as in \ref{Ex1}, then $$cel(u)\geq 2\pi\cdot\frac{9}{10}.$$\end{theorem}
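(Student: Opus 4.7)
The plan is to show that every path $F_s$ in $U(\text{M}_{10}(\text{C}[0,1]))$ from $1$ to $u$ has $\text{length}(F_s)\geq 2\pi\cdot\frac{9}{10}$; the bound on $cel(u)$ then follows by taking the infimum. Fix $\varepsilon>0$. By Corollary \ref{leng}, replace $F_s$ by an approximating path $\tilde{F}_s$ with $\|\tilde{F}-F\|<\varepsilon$, $|\text{length}(\tilde{F})-\text{length}(F)|<\varepsilon$, and $\tilde{F}_s(t)$ having $10$ distinct eigenvalues for all $(s,t)\in[0,1]\times[0,1]$. By Remark \ref{remark} and Lemma \ref{sm}, these eigenvalues admit a continuous labeling $f^1_s(t),\ldots,f^{10}_s(t):[0,1]^2\to S^1$, and since $[0,1]^2$ is simply connected each $f^j$ lifts to a continuous $\tilde{f}^j:[0,1]^2\to\mathbb{R}$ with $e^{i\tilde{f}^j}=f^j$. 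Normalize so that $\tilde{f}^j(0,0)\in(-\pi,\pi)$; since $\tilde{F}_0(t)$ is $\varepsilon$-close to $I$, each $f^j_0(t)$ lies in a small arc around $1$, and continuity forces $|\tilde{f}^j(0,t)|=O(\varepsilon)$ uniformly in $j$ and $t$.

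Next, I reduce the problem to a single eigenvalue curve. Lemma \ref{ca} gives $\text{length}(\tilde{F}_s)\geq\max_j\text{length}(f^j_s)$, and the lifting argument of Lemma \ref{ex0} yields
\[
\text{length}(f^j_s) \;=\; \int_0^1 \Big\|\frac{\partial f^j_s}{\partial s}\Big\|\,ds \;\geq\; \sup_{t\in[0,1]}\big|\tilde{f}^j(1,t)-\tilde{f}^j(0,t)\big|
\]
for each $j$, so it suffices to produce a single index $j_0$ for which the right-hand side is essentially $2\pi\cdot\frac{9}{10}$. For $t\in(0,1)$, $u(t)$ has the singled-out eigenvalue $e^{-2\pi it\cdot 9/10}$ and the nine-fold eigenvalue $e^{2\pi it/10}$, separated by $|e^{-2\pi it}-1|=2\sin(\pi t)$. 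Choose $0<a<\frac{1}{2}<b<1$ with $2\sin(\pi a)$ and $2\sin(\pi(1-b))$ much larger than $\varepsilon$; by Proposition \ref{we}, for each $t\in[a,b]$ exactly one eigenvalue of $\tilde{F}_1(t)$ lies within $O(\varepsilon)$ of $e^{-2\pi it\cdot 9/10}$, and continuity of the labeling in $t$ picks out a single index $j_0$ tracking this eigenvalue throughout $[a,b]$. Continuity of the lift then forces
\[
\tilde{f}^{j_0}(1,t) \;=\; -2\pi t\cdot\frac{9}{10} + 2\pi m + O(\varepsilon), \qquad t\in[a,b],
\]
for some fixed integer $m$ depending on $F_s$.

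Because $-2\pi t\cdot\frac{9}{10}+2\pi m$ is linear and monotone in $t$, its absolute value on $[a,b]$ is maximized at an endpoint, and as $a\to 0^+$ and $b\to 1^-$ that maximum tends to $\max\big(|2\pi m|,\,2\pi|m-\frac{9}{10}|\big)$. A direct case check gives $\min_{m\in\mathbb{Z}}\max\big(|2\pi m|,\,2\pi|m-\frac{9}{10}|\big)=2\pi\cdot\frac{9}{10}$, realized at $m=0$. So regardless of which integer $m$ the path produces, $\text{length}(f^{j_0}_s)\geq 2\pi\cdot\frac{9}{10}-O(\varepsilon)$, and chaining through the previous inequalities and letting $\varepsilon\to 0$ yields $cel(u)\geq 2\pi\cdot\frac{9}{10}$. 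The main obstacle is the middle step: one must verify that the single continuous labeling produced by Lemma \ref{sm} really does isolate the singled-out eigenvalue throughout $[a,b]$ and that the winding integer $m$ is constant there. The lucky arithmetic that the minimization over $m$ lands exactly on $2\pi\cdot\frac{9}{10}$ is what makes the lower bound match the explicit construction of a path of that length.
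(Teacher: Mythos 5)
Your proposal is correct and follows essentially the same route as the paper's proof: approximate the path by one with simple spectrum via Corollary~\ref{leng}, extract a continuous eigenvalue labeling via Lemma~\ref{sm}/Remark~\ref{remark}, reduce to a scalar-valued path via Lemma~\ref{ca}, and then isolate the eigenvalue branch $j_0$ that tracks $e^{-2\pi i t\cdot 9/10}$ on an interval $(\varepsilon,1-\varepsilon)$. The only difference is cosmetic: where the paper cites Lemma~\ref{ex0} directly to bound $\text{length}(f^{j_0}_s)$, you unfold that lemma's covering-space argument inline (lift to $\mathbb{R}$, observe the winding integer $m$, and minimize $\max(|2\pi m|,\,2\pi|m-\tfrac{9}{10}|)$ over $m$), which is precisely the content the paper is invoking.
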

\begin{proof} Let $\widetilde{F}_s$ be a path in $U(\text{M}_{10}(\text{C}[0,1]))$ with $\widetilde{F}_0=1\in \text{M}_{10}(\text{C}[0,1])$ and $\widetilde{F}_1(t)=u(t).$ By Corollary \ref{leng}, there is a path $F_s$ in
$U(\text{M}_{10}(\text{C}[0,1]))$ such that (1) $\|F-\widetilde{F}\|<\varepsilon/2$, (2) $F_s(t)$ has no repeated eigenvalues for all $(s,t)\in[0,1]\times[0,1]$, and (3)$|\text{length}(\widetilde{F})-\text{length}(F)|<\varepsilon/2$. By Lemma \ref{sm} and Remark \ref{remark}, there are continuous maps $f^{1},f^{2},...,f^{10}:[0,1]\times[0,1]\rightarrow S^{1}$ and unitaries $U_s(t)$ such that $F_s(t)=U_s(t)\text{diag}[f^{1}_s(t),f^{2}_s(t),...,f^{10}_s(t)]U_s(t)^*$. By Lemma \ref{ca}, $$\text{length}(F_s)\geq\max_{1\leq i\leq 10}\{\text{length}(f_s^i)\}.$$
Since $\|F-\widetilde{F}\|<\varepsilon/2,$
 $\|f^j_0-1\|< \varepsilon/2 \mbox{ for all } 1\leq j\leq 10$. For each fixed  $t\in (\varepsilon, 1-\varepsilon)$,  there exists one and only one $j_0$ such that $$\|f^{j_0}_1(t)-e^{-2\pi it\frac{9}{10}}\|< \varepsilon/2.~~$$ For other $j\neq j_0$
$$\|f^{j}_1(t)-e^{ 2\pi it\frac{1 }{10}}\|< \varepsilon/2. $$
(We use the fact that $\|e^{ 2\pi it\frac{1}{10}}-e^{-2\pi it\frac{9}{10}}\|\geq \varepsilon$ for $t\in (\varepsilon, 1-\varepsilon)$ .)
Since all $f^j$'s are continuous, the index $j_0$ should be the same for all $t\in (\varepsilon, 1-\varepsilon)$.

Thus $f^{j_0}_s$ is a path in $U(\text{C}[0,1])$ connecting a point near 1 and a point near $e^{-2\pi it\frac{9}{10}}$.
  By Lemma \ref{ex0}, $\text{length}(f^{j_0}_s)\geq\frac{9}{10}\cdot2\pi-\varepsilon.$ Therefore,
$$\text{length}(\tilde{F}_s)\geq\frac{9}{10}\cdot2\pi-\varepsilon/2-\varepsilon\geq\frac{9}{10}\cdot2\pi-2\varepsilon.$$ Since $\varepsilon$ is arbitrary, we have $cel(u)\geq \frac{9}{10}\cdot 2\pi$, which completes the proof.
\end{proof}

\begin{example}\label{Ex2} Examples in some simple inductive limit $\text{C}^*$-algebras.

Let $\{x_{1}, x_{2}, ...\}$ be a countable distinct dense subset of $[0,1]$ and let
$\{k_{n}\}_{n=2}^{\infty}$ be a sequence of integers satisfying $$\prod_n\frac{10^{k_{n}}-1}{10^{k_{n}}}>\frac{11}{12}.$$
Let
$$A_{1}=\text{M}_{10}(\text{C}[0,1]),~ A_{2}=\text{M}_{10^{k_{2}}}(A_{1}), ~..., ~A_{n}=\text{M}_{10^{k_{n}}}(A_{n-1}),....$$ Define $\varphi_{n,n+1}: A_{n}\rightarrow A_{n+1}$ by

$$
\varphi_{n,n+1}(f) =
  \begin{pmatrix}
   f & 0 & \cdots & 0 &0\\
   0 & f & \cdots & 0 &0\\
   \vdots  & \vdots  & \ddots & \vdots &\vdots \\
   0 & 0 & \cdots & f &0\\
   0 & 0 & \cdots & 0 &f(x_{n})\\
  \end{pmatrix}_{10^{k_{n}}\times 10^{k_{n}}}
$$ and
 $A=\underrightarrow{\lim}(A_{i}, \varphi_{i,i+1})$ be the inductive limit $\text{C}^*$-algebra. Then $A$ is simple.\\
Let $u(t)\in A_1$ be defined as in Example \ref{Ex1}, then  (see Theorem \ref{th2} and Corollary \ref{co3}) $$cel(\varphi_{1,\infty}(u))\geq \frac{9}{10}\cdot 2\pi. $$
\end{example}

Inductive limit of such form was studied by Goodearl \cite{G} and its exponential rank was calculated by Gong and Lin \cite{GL}.

\begin{lemma}\label{isometry} Let $\theta:P^L\mathbb{R}\rightarrow (\mathbb{R}^L,d_{max})$ be defined by $$\theta[x_1,x_2,\cdots,x_L]=(y_1,y_2,\cdots,y_L)$$
iff $[x_1,x_2,\cdots,x_L]=[y_1,y_2,\cdots,y_L]$ and $y_1\leq y_2\leq\cdots\leq y_L$. Then $\theta$ is an isometry.
\end{lemma}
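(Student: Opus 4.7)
The plan is to verify three things about $\theta$: it is well-defined, it is a bijection, and it preserves distances. Well-definedness is automatic because sorting produces a unique non-decreasing tuple independent of the representative of the class; bijectivity is equally immediate, since two sorted tuples represent the same multiset if and only if they are literally equal, and every sorted tuple is trivially in the image. So the whole content is the distance-preservation claim: if we represent $[x_1,\ldots,x_L]$ and $[\tilde x_1,\ldots,\tilde x_L]$ by their sorted versions $y_1\leq\cdots\leq y_L$ and $\tilde y_1\leq\cdots\leq\tilde y_L$, then
\[
\min_{\sigma\in S_L}\max_{1\leq i\leq L}|y_i-\tilde y_{\sigma(i)}|=\max_{1\leq i\leq L}|y_i-\tilde y_i|.
\]

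The natural approach is a bubble-sort argument driven by a two-point swap lemma: if $a\leq b$ and $c\leq d$, then $\max(|a-c|,|b-d|)\leq\max(|a-d|,|b-c|)$. I would prove this lemma by a short case split on the relative order of $\{a,b\}$ and $\{c,d\}$ on the line; in every case the two ``crossed'' differences dominate the two ``parallel'' differences, because whenever the parallel differences are both small the two intervals overlap and the crossed ones reach across.

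Given the swap lemma, I proceed as follows. Fix any $\sigma\in S_L$ and suppose $\sigma$ is not the identity; then there exist indices $i<j$ with $\sigma(i)>\sigma(j)$. Let $\sigma'$ be $\sigma$ composed with the transposition of $i,j$. Apply the swap lemma with $a=y_i$, $b=y_j$, $c=\tilde y_{\sigma(j)}$, $d=\tilde y_{\sigma(i)}$ (the inequalities $a\leq b$ and $c\leq d$ hold because both sequences are sorted). This gives
\[
\max\bigl(|y_i-\tilde y_{\sigma'(i)}|,|y_j-\tilde y_{\sigma'(j)}|\bigr)\leq\max\bigl(|y_i-\tilde y_{\sigma(i)}|,|y_j-\tilde y_{\sigma(j)}|\bigr),
\]
while the other $L-2$ terms $|y_k-\tilde y_{\sigma(k)}|$ are unchanged. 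Hence the overall maximum over $i$ does not increase. Since each such adjacent-swap strictly reduces the number of inversions of $\sigma$, iterating finitely many times converts $\sigma$ to the identity without ever increasing the maximum, which proves $\max_i|y_i-\tilde y_i|\leq\max_i|y_i-\tilde y_{\sigma(i)}|$. As the opposite inequality is trivial (take $\sigma=\mathrm{id}$ in the min), equality follows, and $\theta$ is an isometry.

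I expect the only mildly delicate step to be the swap lemma itself, because a careless attempt that tries to compare each side termwise fails (for instance $|a-c|\leq|a-d|$ is false in general). The right view is that one must compare the maxima, not the individual terms, and the five cases corresponding to the relative positions of $a\leq b$ and $c\leq d$ on the line all reduce to the single observation that the largest of $|a-c|,|a-d|,|b-c|,|b-d|$ is always either $|a-d|$ or $|b-c|$ (the ``outside'' pair). Once this is pinned down, the rest of the argument is mechanical.
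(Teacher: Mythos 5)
Your proof is correct, but it takes a genuinely different route from the paper's. The paper argues by contradiction via pigeonhole: assuming some $\sigma$ achieves $l = \max_i|a_i - b_{\sigma(i)}| < \max_i|a_i-b_i|$, it fixes an index $k$ where $|a_k-b_k|>l$, observes (say, when $a_k<b_k$) that $|a_i-b_j|>l$ for all $i\leq k$, $j\geq k$, and then notes that the set $\{\sigma(1),\dots,\sigma(k)\}$ has cardinality $k$, so some $i_0\leq k$ must satisfy $\sigma(i_0)\geq k$, contradicting the definition of $l$. Your argument is instead an exchange (bubble-sort) argument, built on the two-point swap lemma: if $a\leq b$ and $c\leq d$, then $\max(|a-c|,|b-d|)\leq\max(|a-d|,|b-c|)$, which lets you uncross one inversion of $\sigma$ at a time without increasing the objective, until $\sigma$ becomes the identity. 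Both proofs are standard for this kind of ``sorting minimizes $\ell^\infty$-mismatch'' statement. The paper's is shorter and more direct; yours is slightly more constructive in spirit, as it exhibits an explicit monotone path of permutations from $\sigma$ to $\mathrm{id}$, and the swap lemma isolates exactly the local inequality that drives the result. Two small remarks on your write-up: the swap lemma is stated but only sketched (it does hold, by the case split on the relative order of $a\leq b$ and $c\leq d$, or more cleanly by checking $|a-c|\leq\max(|a-d|,|b-c|)$ and $|b-d|\leq\max(|a-d|,|b-c|)$ separately); and the transposition you apply is a general transposition of an inverted pair, not necessarily an adjacent swap, though your termination claim remains valid since swapping any inverted pair strictly decreases the inversion count. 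Finally, since the lemma only asserts $\theta$ is an isometry (i.e., distance-preserving), the preliminary discussion of bijectivity onto $\mathbb{R}^L$ is not needed -- $\theta$ is in any case not surjective onto $\mathbb{R}^L$, only onto the cone of sorted tuples.
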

\begin{proof} Let $a=[a_1,a_2,\cdots,a_L],~b=[b_1,b_2,\cdots,b_L]$ be any two elements in $P^L\mathbb{R}$. Without loss of generality, we can assume that $a_1\leq a_2\leq\cdots\leq a_L$ and $b_1\leq b_2\leq\cdots\leq b_L$. Thus $$d_{max}(\theta(a),\theta(b))=\max_{1\leq i\leq L}|a_i-b_i|.$$
If $dist(a,b)\neq \max\limits_{1\leq i\leq L}|a_i-b_i|$, then there exist a permutation $\sigma\in S_L$ such that $$l\triangleq \max_{1\leq i\leq L}|a_i-b_{\sigma(i)}|<\max_{1\leq i\leq L}|a_i-b_i|.$$ Since $l<\max\limits_{1\leq i\leq L}|a_i-b_i|$, there exists $k$ such that $|a_k-b_k|>l.$\\
If $a_k<b_k$, then  $|a_i-b_j|>l$ for any $i\leq k,j\geq k.$ Since the cardinality of the set $\{\sigma(1),\sigma(2),\cdots,\sigma(k)\}$ is $k$, there is at least one element $i_0\in \{1,2,\cdots,k\}$ with $\sigma(i_0)\geq k$. Then  $|a_{i_0}-b_{\sigma(i_0)}|>l$. Therefore, $$\max_{1\leq i\leq k}|a_i-b_{\sigma(i)}|>l.$$\\
Similarly, if $a_k>b_k$, one can prove $$\max_{k\leq i\leq L}|a_i-b_{\sigma(i)}|>l.$$ In either case, it contradicts
$l=\max\limits_{1\leq i\leq L}|a_i-b_{\sigma(i)}|$. Therefore, $$dist(a,b)= \max\limits_{1\leq i\leq L}|a_i-b_i|=d_{max}(\theta(a),\theta(b)),$$ which means $\theta$ is an isometry.

\end{proof}

\begin{theorem}\label{th2} Let $A_i,~\varphi_{i,i+1},~(i\in \mathbb{N})$ be defined as in \ref{Ex2}, for any $\varepsilon\in (0,\frac{1}{100})$, let $u_\varepsilon\in A_1$ be defined by:
$$
u_\varepsilon(t) =
  \begin{pmatrix}
   e^{-2\pi it(\frac{9}{10}-\varepsilon)} & 0 & \cdots & 0 \\
   0 & e^{2\pi it(\frac{1}{10}-\varepsilon)} & \cdots & 0 \\
   \vdots  & \vdots  & \ddots & \vdots  \\
   0 & 0 & \cdots & e^{2\pi it(\frac{1}{10}-\varepsilon)}
  \end{pmatrix}_{10\times 10},
$$then $$cel(\varphi_{1,n}(u_\varepsilon))\geq 2\pi(\frac{9}{10}-\varepsilon)-5\varepsilon
~~\mbox{for all } n\in \mathbb{N}.$$
\end{theorem}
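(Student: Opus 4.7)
The plan is to adapt the strategy of the proof of Theorem~3.8 to this multi-block inductive-limit setting, with a pigeonhole step that exploits the dilution hypothesis $\prod_n(10^{k_n}-1)/10^{k_n}>11/12$. View $A_n=\text{M}_M(C[0,1])$ with $M=10\prod_{i=2}^n 10^{k_i}$, and put $M'=M/10$, $N_n=\prod_{i=2}^n(10^{k_i}-1)$; then $\varphi_{1,n}(u_\varepsilon)(t)$ is block-diagonal with $N_n$ ``live'' copies of $u_\varepsilon(t)$ and the remaining $M'-N_n$ copies ``frozen'' at values $u_\varepsilon(x_j)$ for various $j<n$. Given any path $\widetilde F_s$ from $1$ to $\varphi_{1,n}(u_\varepsilon)$, I would apply Corollary~\ref{leng} with an auxiliary parameter $\varepsilon_1\ll\varepsilon$ to obtain a nearby path $F_s$ with distinct eigenvalues throughout, satisfying $\|F-\widetilde F\|<\varepsilon/2$ and $|\text{length}(F)-\text{length}(\widetilde F)|<\varepsilon/2$; Lemma~\ref{sm}/Remark~\ref{remark} then yield continuous eigenvalue functions $f^j_s(t)$, and Lemma~\ref{ca} gives $\text{length}(F_s)\ge\max_j\text{length}(f^j_s)$.

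The heart of the argument is to select a single index $j_0$ for which $f^{j_0}_1(t)$ is uniformly close to the ``type-A'' eigenvalue $e^{-2\pi it(9/10-\varepsilon)}$ of $\varphi_{1,n}(u_\varepsilon)(t)$. Let $x_{i_1}'<\cdots<x_{i_{n-1}}'$ denote the critical points in $(0,1)$, forming subintervals $I_0,\ldots,I_{n-1}$ of $[0,1]\setminus\{x'_l\}$. On each $I_l$ the spectrum of $\varphi_{1,n}(u_\varepsilon)(t)$ splits into a ``class-minus'' part $\{e^{-2\pi it(9/10-\varepsilon)}\}\cup\{e^{-2\pi ix_j'(9/10-\varepsilon)}\}_j$ (type A and type C$(-,x_j')$) and a symmetrically defined ``class-plus'' part, and these two parts stay bounded apart on $[\varepsilon,1-\varepsilon]$; so Weyl's inequality (Proposition~\ref{we}) applied to the $\varepsilon_1$-perturbation produces a well-defined set $S_l\subset\{1,\ldots,M\}$ of type-A indices with $|S_l|=N_n$. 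Crossing a critical point $x'_{l+1}$ merges type A with type C$(-,x'_{l+1})$, so $S_l$ and $S_{l+1}$ can differ only by a permutation within the $N_n+m_{x'_{l+1}}^{(n)}$ indices of that merged cluster (where $m_y^{(n)}$ denotes the multiplicity of $u_\varepsilon(y)$ in $\varphi_{1,n}(u_\varepsilon)$); in particular $|S_l\cap S_{l+1}|\ge N_n-m_{x'_{l+1}}^{(n)}$. Iterating across all $n-1$ transitions and using $\sum_j m_{x_j'}^{(n)}=M'-N_n$, one obtains
\[
\Bigl|\,\bigcap_{l=0}^{n-1} S_l\,\Bigr|\;\ge\; N_n-(M'-N_n)\;=\;2N_n-M'\;>\;\tfrac{5M'}{6}\;>\;0,
\]
thanks to the hypothesis $N_n/M'>11/12$. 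Picking any $j_0\in\bigcap_l S_l$, continuity of $f^{j_0}_1$ across the critical points (at which the type-A and type-C$(-,x'_l)$ values coincide exactly) shows that $\|f^{j_0}_1(t)-e^{-2\pi it(9/10-\varepsilon)}\|$ is $O(\varepsilon_1)$ uniformly in $t\in[0,1]$.

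Finally, a triangle-inequality application of Lemma~\ref{ex0} to the unitary $f^{j_0}_1\in U(C[0,1])$, which is within $O(\varepsilon_1)$ of $e^{-2\pi it(9/10-\varepsilon)}$ (a unitary whose exponential length equals $2\pi(9/10-\varepsilon)$), gives $cel(f^{j_0}_1)\ge 2\pi(9/10-\varepsilon)-O(\varepsilon_1)$. Since $f^{j_0}_s$ is a path from near $1$ (at $s=0$) to $f^{j_0}_1$ (at $s=1$), its length exceeds $cel(f^{j_0}_1)$ up to the same small error. Chaining
\[
\text{length}(\widetilde F_s)\ge \text{length}(F_s)-\tfrac{\varepsilon}{2}\ge \text{length}(f^{j_0}_s)-\tfrac{\varepsilon}{2}\ge cel(f^{j_0}_1)-\varepsilon\ge 2\pi(\tfrac{9}{10}-\varepsilon)-5\varepsilon
\]
and taking the infimum over $\widetilde F_s$ yields the claim. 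The delicate step is the pigeonhole bound $|\bigcap_l S_l|>0$: it is exactly here that the dilution hypothesis $\prod(10^{k_i}-1)/10^{k_i}>11/12$ is essential, since otherwise individual swaps across many critical points could drain the intersection for large $n$ and no single $j_0$ would witness the full $(9/10-\varepsilon)$-winding.
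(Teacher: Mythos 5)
Your argument is a genuinely different route to the theorem than the one in the paper, and the core idea is sound. The paper avoids any eigenvalue-tracking by a two-step reduction: first it lifts the eigenvalues of $\varphi_{1,n}(u_\varepsilon)(t)$ to $\mathbb{R}$ and, using the sorting isometry $\theta$ of Lemma~\ref{isometry}, obtains continuous sorted functions $\bar y_1\le\cdots\le\bar y_L$; a simple counting argument (at most $\gamma$ constants can lie below $-(\tfrac{9}{10}-\varepsilon)t$ and at most $\gamma+\beta$ above, with $\alpha>\gamma$) forces $\bar y_k(t)=-(\tfrac{9}{10}-\varepsilon)t$ \emph{exactly} for $\gamma+1\le k\le\alpha$. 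Then it invokes Thomsen's theorem to replace $\varphi_{1,n}(u_\varepsilon)$, up to $\varepsilon$, by a conjugate of the diagonal model $W(t)$, perturbs $W$ to $\widetilde W$ with distinct eigenvalues while keeping $\tilde y_\alpha(t)=e^{-2\pi it(9/10-\varepsilon)}$ unchanged, and finally uses the ``Moreover'' clause of Corollary~\ref{leng} so that $F_1=\widetilde W$ exactly; this makes $f_1^\alpha(t)=e^{-2\pi it(9/10-\varepsilon)}$ on the nose, with no ambiguity. You instead apply Corollary~\ref{leng} directly to a path ending at $\varphi_{1,n}(u_\varepsilon)$ (the ``Moreover'' clause is unavailable because of repeated eigenvalues), and substitute for the exact identification a pigeonhole/tracking argument across the critical points $t=x_j$. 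What you gain is that you bypass both Thomsen's theorem and the sorting isometry; what you lose is exactness, which is why you must control the permutations of labels at each crossing. Interestingly, your pigeonhole only needs $N_n>\tfrac{1}{2}M'$, while the paper's counting needs $\alpha>\gamma$, i.e.\ $N_n>\tfrac{10}{11}M'$, so your use of the dilution hypothesis is in principle less demanding.

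Two points in your sketch deserve to be tightened. First, your statement that the ``class-minus'' and ``class-plus'' parts ``stay bounded apart on $[\varepsilon,1-\varepsilon]$'' is not quite right as phrased (the two arcs nearly cover $S^1$); what is true, and what you need, is that the type-A value $e^{-2\pi it(9/10-\varepsilon)}$ itself stays at distance $\gtrsim\varepsilon$ from all type-B and type-C$(+)$ values for $t\in[\varepsilon,1-\varepsilon]$, and meets the type-C$(-,x_j)$ constant only at $t=x_j$. Second, and more substantively, the cluster-tracking requires $\varepsilon_1$ to be small relative not only to $\varepsilon$ but also to the spacings among the critical points $x_1,\ldots,x_{n-1}$ (so that each $I_l$ has interior on which the type-A cluster is isolated, and so that adjacent type-C$(-)$ constants do not merge simultaneously). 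This is fine for your purpose because $n$ is fixed and the final bound is independent of $\varepsilon_1$, but you should state it explicitly; as written the reader may worry that the $x_j$, coming from a dense sequence, can crowd together in a way that defeats the argument. Once those two points are made precise, your estimate $\bigl|\bigcap_l S_l\bigr|\ge 2N_n-M'>0$ and the final chain of inequalities carry through, and you recover the same bound $2\pi(\tfrac{9}{10}-\varepsilon)-5\varepsilon$ after absorbing the $O(\varepsilon_1)$ errors.
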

\begin{proof} By an easy calculation, we know that $$\varphi_{1,n}(u_\varepsilon)\in M_{L}(C[0,1]),~~\text{ where } L=10\prod_{i=2}^n 10^{k_i}.$$ On the diagonal of
$\varphi_{1,n}(u_\varepsilon)$, there are $\prod\limits_{i=2}^n (10^{k_i}-1)$ terms equal to $e^{-2\pi it(\frac{9}{10}-\varepsilon)}$, $9\cdot\prod\limits_{i=2}^n (10^{k_i}-1)$ terms equal to $e^{2\pi it(\frac{1}{10}-\varepsilon)}$ and the rest are constants. Let $\alpha$, $\beta$ and $\gamma$ denote, respectively, the numbers of terms of the forms $e^{-2\pi it(\frac{9}{10}-\varepsilon)}$, $e^{2\pi it(\frac{1}{10}-\varepsilon)}$ and constants on the diagonal of $\varphi_{1,n}(u_\varepsilon)$ (i.e. $\alpha=\prod\limits_{i=2}^n (10^{k_i}-1)$, $\beta=9\cdot\prod\limits_{i=2}^n (10^{k_i}-1)$ and $\gamma=L-\alpha-\beta$). Therefore $\frac{\alpha+\beta}{L}=\prod\limits_{i=2}^n\frac{10^{k_{i}}-1}{10^{k_{i}}}>\frac{11}{12}$, which implies $\frac{\alpha}{\gamma}>\frac{11}{10}$.

For each $t\in[0,1]$, let $E(t)$ be the set consisting of all eigenvalues of $\varphi_{1,n}(u_\varepsilon)(t)$ (counting multiplicities). Define continuous functions  $\bar{\bar{y}}_k(t)~(1\leq k\leq L)$ from $[0,1]$ to $[-\frac{9}{10}+\varepsilon,\frac{1}{10}-\varepsilon]$ as follows:
\begin{align*}
  &\bar{\bar{y}}_{k}(t)=-(\frac{9}{10}-\varepsilon)t,~~~~\mbox{if }1\leq k\leq\alpha, \\
  &\bar{\bar{y}}_{k}(t)=(\frac{1}{10}-\varepsilon)t,~~~~~~\mbox{if }\alpha+1\leq k\leq\alpha+\beta,\\
  &\bar{\bar{y}}_{k}(t)=-(\frac{9}{10}-\varepsilon)x_\bullet\mbox{ or }(\frac{1}{10}-\varepsilon)x_\bullet,~~~~\mbox{if }\alpha+\beta+1\leq k\leq L,
\end{align*}
 for $ x_\bullet\in\{x_1,x_2,\cdots,x_n\}$ { such that } $\{\exp\{2\pi i\bar{\bar{y}}_k(t)\}:1\leq k\leq L\}=E(t)$ for $\forall t\in[0,1]$.

Let $\theta$ be the map defined in Lemma \ref{isometry} and let $p_k$ $(1\leq k\leq L)$ be projections from $\mathbb{R}^L$ to $\mathbb{R}$ with respect to the $k$-th coordinate. Define functions $\bar{y}_{k}(t):[0,1]\rightarrow[-\frac{9}{10}+\varepsilon,\frac{1}{10}-\varepsilon]$ for $1\leq k\leq L$ as follows:
$$\bar{y}_k(t)=p_k\theta[\bar{\bar{y}}_1(t),\bar{\bar{y}}_2(t),\cdots,\bar{\bar{y}}_L(t)].$$
Then $\bar{y}_{k}(t)$ $(1\leq k\leq L)$ are continuous functions with $\bar{y}_{1}(t)\leq \bar{y}_{2}(t)\leq\cdots\leq \bar{y}_{L}(t)$ for all $ t\in [0,1]$ and $\{\bar{y}_{1}(t),\bar{y}_{2}(t),\cdots,\bar{y}_{L}(t)\}$ (counting  multiplicities) is equal to $\{\bar{\bar{y}}_1(t),\bar{\bar{y}}_2(t),\cdots,\bar{\bar{y}}_L(t)\}$.
 For $t\in(0,1)$,  there are at most $\gamma$ terms (the constants referred to above) of  $\{\bar{y}_k(t)\}_{k=1}^L$ which could be less than $-(\frac{9}{10}-\varepsilon)t$, therefore,
$$\bar{y}_k(t)\geq -(\frac{9}{10}-\varepsilon)t,~~~\mbox{for }k\geq \gamma+1.$$ Similarly, for $t\in(0,1)$ there are at most $\gamma+\beta$ terms (the  constants or terms of the form $(\frac{1}{10}-\varepsilon)t$) of $\{\bar{y}_k(t)\}_{k=1}^{L}$ which could be greater than $-(\frac{9}{10}-\varepsilon)t$, so $$\bar{y}_k(t)\leq -(\frac{9}{10}-\varepsilon)t,~~~\mbox{for }k\leq L-\gamma-\beta=\alpha.$$
Since $\alpha>\gamma$,
$$ \bar{y}_k(t)=-(\frac{9}{10}-\varepsilon)t,~~\mbox{ for }\gamma+1\leq k\leq \alpha.$$
Let $y_{k}(t)=\exp\{2\pi i{\bar{y}_k(t)}\}$ for $1\leq k\leq L$. Then we have
$$ y_k(t)=e^{-2\pi it(\frac{9}{10}-\varepsilon)},~~\mbox{ for }\gamma+1\leq k\leq \alpha,$$
and $$\{y_{k}(t):1\leq k\leq L\}=E(t),~~ \mbox{ for all }t\in[0,1].$$

Let
$$
W(t) =
  \begin{pmatrix}
   y_{1}(t) & 0 & \cdots & 0 \\
   0 & y_{2}(t) & \cdots & 0 \\
   \vdots  & \vdots  & \ddots & \vdots  \\
   0 & 0 & \cdots & y_{L}(t)
  \end{pmatrix}.
$$

\noindent Then for all $t\in[0,1]$, $W(t)$ and $\varphi_{1,n}(u_\varepsilon)(t)$ have exactly same eigenvalues (counting multiplicities). By a result of Thomsen (see \cite{13} Theorem 1.5), $\exists \Lambda(t)\in U(M_{L}(C[0,1]))$ such that $$\|\Lambda(t)W(t)\Lambda(t)^{*}-\varphi_{1,n}(u_\varepsilon)(t)\|<\varepsilon,$$ for all $t\in[0,1].$ Therefore, $cel(\varphi_{1,n}(u_\varepsilon))\geq cel(W)-\varepsilon\pi/2>cel(W)-2\varepsilon$. (Here we use two facts: for unitaries $a,b$, (1) $\|a-b\|<\varepsilon<1$ implies $|cel(a)-cel(b)|<\varepsilon\pi/2$ and (2) $cel(uau^*)=cel(a)$ where $u$ is a unitary).

Let $-\varepsilon<\varepsilon_1<\varepsilon_2<\cdots<\varepsilon_{\alpha}=0<\varepsilon_{\alpha+1}<\varepsilon_{\alpha+2}<\cdots<\varepsilon_{L}<\varepsilon$
be chosen, and $\tilde{y}_k(t)=\exp\{2\pi i(\bar{y}_k(t)+\varepsilon_k)\}$ and let
$$
\widetilde{W}(t) =
  \begin{pmatrix}
   \widetilde{y}_{1}(t) & 0 & \cdots & 0 \\
   0 & \widetilde{y}_{2}(t) & \cdots & 0 \\
   \vdots  & \vdots  & \ddots & \vdots  \\
   0 & 0 & \cdots & \widetilde{y}_{L}(t)
  \end{pmatrix}\in U(M_{L}(C[0,1])).
$$
Then $\widetilde{W}(t)$ has no repeated eigenvalues for all $t\in[0,1]$ and
$$\|\widetilde{W}(t)-W(t)\|<\varepsilon,$$ $$\widetilde{y}_{i}(t)\neq\widetilde{y}_{j}(t) \mbox{ for } i\neq j\mbox{ and } \widetilde{y}_{{\alpha}}(t)=e^{-2\pi it(\frac{9}{10}-\varepsilon)}.$$

 Let $\widetilde{F}_{s}(t)$ be a path in $U(M_L(C[0,1]))$ from $1$ to $\widetilde{W}(t)$ with $\widetilde{F}_0(t)=1$ and $\widetilde{F}_1(t)=\widetilde{W}(t).$ By Corollary \ref{leng}, there is a smooth path $F_{s}$ in $U(M_{L}(C[0,1]))$ such that $\|F-\widetilde{F}\|<\varepsilon$ and $F_{1}(t)=\widetilde{F}_1(t)=\widetilde{W}(t)$ and $F_{s}(t)$ has no repeated eigenvalues for all $(s,t)\in[0,1]\times[0,1]$ and $|\text{length}(\widetilde{F})-\text{length}(F)|\leq\varepsilon$. By {Lemma \ref{sm}} and Remark \ref{remark}, there exist continuous maps $f^{1},f^{2},...,f^{L}:[0,1]\times[0,1]\longrightarrow S^{1}$ and unitaries $U_{s}(t)$ such that
$$F_{s}(t)=U_{s}(t)\text{diag}(f_{s}^{1}(t),f_{s}^{2}(t),...,f_{s}^{L}(t))U_{s}(t)^{*}.$$
Since $F_{1}(t)=\widetilde{W}(t)$ and $\widetilde{y}_{\alpha}=e^{-2\pi it(\frac{9}{10}-\varepsilon)}$, we can assume $f_{1}^{\alpha}=e^{-2\pi it(\frac{9}{10}-\varepsilon)}.$
Therefore, $f_{s}^{\alpha}$ is a path in $U(C[0,1])$ from a point near $1$ to $e^{-2\pi it(\frac{9}{10}-\varepsilon)}.$ By {Lemma \ref{ex0}},
$$\text{length}(f_{s}^{\alpha})\geq 2\pi(\frac{9}{10}-\varepsilon)-\varepsilon.$$
Therefore, $$\text{length}(\tilde{F}_s)\geq2\pi(\frac{9}{10}-\varepsilon)-\varepsilon-\varepsilon=2\pi(\frac{9}{10}-\varepsilon)-2\varepsilon,$$ and
$$cel(\varphi_{1,n}(u_\varepsilon))\geq 2\pi(\frac{9}{10}-\varepsilon)-5\varepsilon.$$\end{proof}

\begin{corollary}\label{co3} Let $A=\lim A_i$ and $u\in A_1$ be defined as in \ref{Ex2}, then  $\varphi_{1,\infty}(u)\in CU(A)$ and $$cel(\varphi_{1,\infty}(u))\geq 2\pi\cdot\frac{9}{10}.$$
\end{corollary}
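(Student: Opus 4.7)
The statement splits into two claims: $\varphi_{1,\infty}(u)\in CU(A)$ and $cel(\varphi_{1,\infty}(u))\geq \tfrac{9}{10}\cdot 2\pi$. For the first, observe that $u(t)=\exp(2\pi it\,H)$ with $H=\operatorname{diag}(-\tfrac{9}{10},\tfrac{1}{10},\ldots,\tfrac{1}{10})$ traceless, so $\det u(t)\equiv 1$ and $u\in U_0(A_1)$. For every tracial state $\tau$ on $A_1=M_{10}(C[0,1])$, the de la Harpe--Skandalis determinant of $u$ at $\tau$ vanishes; this characterization (Thomsen) identifies $CU(A_1)$ as the kernel of the combined determinants, so $u\in CU(A_1)$. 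Since $\varphi_{1,\infty}$ is a unital $*$-homomorphism, it carries commutators to commutators and is continuous, hence $\varphi_{1,\infty}(CU(A_1))\subseteq CU(A)$.

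For the cel bound, the plan is a two-stage limit starting from Theorem \ref{th2}. Stage one removes the $\varepsilon$-perturbation. Since $u$ and $u_\varepsilon$ are diagonal and their exponents differ by $2\pi t\varepsilon$, a direct estimate gives $\|u-u_\varepsilon\|\leq 2\pi\varepsilon$, so $\|\varphi_{1,n}(u)-\varphi_{1,n}(u_\varepsilon)\|\leq 2\pi\varepsilon$ for every $n$. Using the Lipschitz estimate $|cel(a)-cel(b)|\leq(\pi/2)\|a-b\|$ for $\|a-b\|<1$ (the fact already invoked inside the proof of Theorem \ref{th2}), Theorem \ref{th2} yields
\[
cel_{A_n}(\varphi_{1,n}(u)) \;\geq\; 2\pi\bigl(\tfrac{9}{10}-\varepsilon\bigr) - 5\varepsilon - \pi^2\varepsilon,
\]
and taking $\varepsilon\to 0$ produces $cel_{A_n}(\varphi_{1,n}(u))\geq \tfrac{9}{10}\cdot 2\pi$ uniformly in $n$.

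Stage two passes from the finite-stage bound to the inductive limit through the standard identity
\[
cel_A(\varphi_{1,\infty}(u)) \;=\; \inf_{n\geq 1}\, cel_{A_n}(\varphi_{1,n}(u)).
\]
The right-hand side is a decreasing infimum because $\varphi_{n,n+1}$ is contractive, mapping any path in $A_n$ to a no-longer path in $A_{n+1}$; this gives the easy direction $\leq$. The converse $\geq$, which is the only technically delicate step, is proved by approximating a near-minimizing path $\gamma$ in $U(A)$: partition the parameter interval finely, replace each $\gamma(t_i)$ by a nearby unitary $v_i$ in $\varphi_{n,\infty}(A_n)$ for a common large $n$ (using density of $\bigcup_n\varphi_{n,\infty}(A_n)$ together with a polar correction), arrange $v_0=1$ and $v_k=\varphi_{1,\infty}(u)$ to match exactly (both lie in $\varphi_{n,\infty}(A_n)$ for every $n\geq 1$), and join consecutive $v_i$'s by short exponential paths inside $A_n$ as in Corollary \ref{leng}; the total length exceeds that of $\gamma$ by only $o(1)$. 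The main obstacle is precisely this matching/approximation step --- one has to choose the partition mesh and approximation error in the correct order so that the accumulated $o(1)$ overhead stays below any prescribed tolerance. Combining the two stages delivers $cel_A(\varphi_{1,\infty}(u))\geq \tfrac{9}{10}\cdot 2\pi$.
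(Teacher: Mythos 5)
Your proposal follows essentially the same route as the paper: combine the bound from Theorem \ref{th2}, use the perturbation estimate $\|u-u_\varepsilon\|\le 2\pi\varepsilon$ plus Lipschitz continuity of $cel$ to get $cel_{A_n}(\varphi_{1,n}(u))\ge\frac{9}{10}\cdot 2\pi$ for all $n$, and then pass to the inductive limit. The paper is terser on both auxiliary points you expand: the $CU$ membership is handled by the remark immediately following the statement (for $M_n(C[0,1])$, $x\in CU$ iff $\det x\equiv 1$, plus functoriality of $CU$ under unital $*$-homomorphisms), and the passage from $\inf_n cel_{A_n}(\varphi_{1,n}(u))$ to $cel_A(\varphi_{1,\infty}(u))$ is dismissed with a single ``Therefore.'' One caution on your stage-two sketch: when you approximate a near-minimizing path in $U(A)$ by a piecewise-exponential path living in $\varphi_{n,\infty}(U(A_n))$, you need its length \emph{as measured in $A_n$}, not in $A$, to be small; since $\varphi_{n,\infty}$ is merely contractive in general, this requires either noting that in the Goodearl construction each $\varphi_{n,m}$ (hence $\varphi_{n,\infty}$) is isometric because one diagonal block is $f$ itself, or using the general fact $\|\varphi_{n,m}(a)\|_{A_m}\downarrow\|\varphi_{n,\infty}(a)\|_A$ and working in $A_m$ for $m$ large. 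Either repair is routine, so the argument is sound; you should just make the choice explicit.
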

\noindent Note that $CU(A)$ is the closure of the commutator subgroup of $U(A)$ (see \cite{lin3}, \cite{dHS}, \cite{Th}).  For $A=\text{M}_n(C[0,1])$ ($n\in\mathbb{N}$), $x\in CU(A)$ if and only if $\det(x(t))=1$ for each $t\in[0,1]$.
\begin{proof} For any $\varepsilon\in(0,\frac{1}{100})$,  let ${u_{\varepsilon}}\in A_1$ be defined as in Theorem \ref{th2}, then
 $$cel(\varphi_{1,n}(u_\varepsilon))\geq 2\pi(\frac{9}{10}-\varepsilon)-5\varepsilon
~~\mbox{for all } n\in \mathbb{N}.$$
Since $\|\varphi_{1,n}(u)-\varphi_{1,n}(u_\varepsilon)\|\leq 2\pi\varepsilon$ for all $n\in \mathbb{N}$, $$cel(\varphi_{1,n}(u))\geq cel(\varphi_{1,n}(u_\varepsilon))-2\pi\varepsilon\geq 2\pi(\frac{9}{10}-2\varepsilon)-5\varepsilon.$$
Since $\varepsilon$ is arbitrary, we have $cel(\varphi_{1,n}(u))\geq 2\pi\cdot\frac{9}{10}$, for all $n\in\mathbb{N}$. Therefore, $$cel(\varphi_{1,\infty}(u))\geq 2\pi\cdot\frac{9}{10}.$$\end{proof}

\begin{remark} Let $A_i,~\varphi_{i,i+1},~(i\in \mathbb{N})$ be defined as in \ref{Ex2}, for any $\varepsilon>0$, there exists $i$ such that $\frac{10^{k_i}-1}{10^{k_i}}\geq 1-\frac{\varepsilon}{2\pi}$. Let $u\in A_i$ be defined by
$$
u(t) =
  \begin{pmatrix}
   e^{-2\pi it\frac{10^{k_i}-1}{10^{k_i}}} & 0 & \cdots & 0 \\
   0 & e^{2\pi it\frac{1}{10^{k_i}}} & \cdots & 0 \\
   \vdots  & \vdots  & \ddots & \vdots  \\
   0 & 0 & \cdots & e^{2\pi it\frac{1}{10^{k_i}}}
  \end{pmatrix}_{10^{k_i}\times 10^{k_i}}.
$$ Then $\varphi_{i,\infty}(u)\in CU(A)$ and $$cel(\varphi_{i,\infty}(u))\geq 2\pi \frac{10^{k_i}-1}{10^{k_i}}\geq 2\pi(1-\frac{\varepsilon}{2\pi})=2\pi-\varepsilon.$$
\end{remark}
\begin{remark}
Our paper with above  results was first posted on arxiv on Dec. 2012. Later H. Lin posted a paper on Feb. 2013 (see \cite{lin2}). In his paper, he provides  examples with
$cel_{CU}(A)>\pi$ for unital simple $AH$-algebras $A$ with tracial rank one, whose $\text{K}_0$-group can realize all possible weakly unperforated Riesz group 
(see 5.11, 5.12 of \cite{lin2}).
He also obtained our theorem 3.11 with different method. But our example shows examples with $cel_{CU}(A)\geq2\pi$ for some simple $AH$-algebras (see 3.12, 3.15, 3.16). Of course, his paper contains many interesting results in the other directions.
\end{remark}


\end{document}